\providecommand{\U}[1]{\protect\rule{.1in}{.1in}}
\newtheorem{theorem}{Theorem}[section]
\newtheorem{proposition}[theorem]{Proposition}
\newtheorem{corollary}[theorem]{Corollary}
\newtheorem{example}[theorem]{Example}
\newtheorem{remark}[theorem]{Remark}
\newtheorem{lemma}[theorem]{Lemma}
\newtheorem{final remark}[theorem]{Final Remark}
\begin{document}

\title{Bidual extensions of Riesz multimorphisms}
\author{Geraldo Botelho\thanks{Supported by CNPq Grant
304262/2018-8 and Fapemig Grant PPM-00450-17.}\,\, and  Luis Alberto Garcia\thanks{Supported by a CAPES scholarship.\newline 2020 Mathematics Subject Classification: 46A40, 46B40, 46B42, 46G25.\newline Keywords: Riesz spaces, Riesz multimorphism, Arens extension, Aron-Berner extension, Banach lattices.
}}
\date{}
\maketitle

\begin{abstract} We prove that all Arens extensions of finite rank Riesz multimorphisms taking values in Archimedean Riesz spaces coincide and are Riesz multimorphisms. Partial results for arbitrary Riesz multimorphisms are obtained. We also prove that, for a class of Banach lattices $F$, which includes $F = c_0, \ell_p, c_0(\ell_p), \ell_p(c_0), \ell_p(\ell_s), 1 < p,s < \infty$, among many others, all Aron-Berner extensions of $F$-valued Riesz multimorphisms between Banach lattices are Riesz multimorphisms.
\end{abstract}

\section{Introduction}

It is well known that the second adjoint of a Riesz homomorphism $u$ between Riesz spaces or Banach lattices is a bidual extension of $u$ and a Riesz homomorphism as well. For multilinear operators, bidual extensions of bilinear operators between normed spaces were first considered by Arens \cite{arens}. In this paper we investigate if Arens extensions of Riesz multimorphisms between Riesz spaces or Banach lattices are Riesz multimorphisms.

By $E^\sim$ we denote the order dual of a Riesz space $E$, hence $E^{\sim\sim} = (E^\sim)^\sim$ denotes its second order dual. If $E$ is a Banach lattice, $E^*$ denotes is topological dual, hence $E^{**}$ stands for its bidual. The symbols $(E^\sim)_n^{\sim}$ and $(E^*)_n^{*}$ stand for the corresponding subspaces formed by the order continuous functionals.

Given Banach lattices $E_1, E_2,F$, and a Riesz bimorphism $A \colon E_1 \times E_2 \longrightarrow F$, Scheffold \cite{scheffold} proved that the restriction of the Arens extension $A^{***} \colon E_1^{**} \times  E_2^{**} \longrightarrow F^{**}$ to $(E_1^*)_n^{*} \times (E_2^*)_n^{*}$ is a Riesz bimorphism. More recently, Boulabiar, Buskes and Page \cite{boulabiar1} proved that, given Archimedean Riesz spaces $E_1, E_2, F$ and a Riesz bimorphism $A \colon E_1 \times E_2 \longrightarrow F$, the restriction of the Arens extension $A^{***} \colon E_1^{\sim\sim} \times  E_2^{\sim\sim} \longrightarrow F^{\sim\sim}$ to $(E_1^\sim)_n^{\sim} \times (E_2^\sim)_n^{\sim }$ is a Riesz bimorphism. Multilinear versions of such extensions in the Banach space setting have been studied for a long time (see, e.g. \cite{livrosean}), usually under the name of Aron-Berner extensions \cite{aronberner}. In the lattice environment, an Arens extension $A^{*(m+1)}\colon E_1^{\sim\sim} \times \cdots \times E_m^{\sim\sim} \longrightarrow F^{\sim\sim}$ of a regular $m$-linear operator $A \colon E_1 \times \cdots \times E_m \longrightarrow F$ between Riesz spaces was studied by Buskes and Roberts \cite{Buskes}, and the corresponding Aron-Berner extension $A^{*(m+1)}\colon E_1^{**} \times \cdots \times E_m^{**} \longrightarrow F^{**}$ of a regular $m$-linear operator $A \colon E_1 \times \cdots \times E_m \longrightarrow F$ between Banach lattices was studied by Boyd, Ryan and Snigireva \cite{ryan1}. In the perspective of these very recent developments, it is natural to address the generalization of the results of Scheffold and Boulabiar et al. in three directions: considering $m$-linear operators, $m \in \mathbb{N}$; taking into account all $m!$ Arens extensions of an $m$-linear operator; investigating if the extensions of Riesz multimorphisms are Riesz multimorphisms on the product of the whole biduals, and not only on the product of the corresponding restrictions. More precisely, the general problems are the following: %Arens (Aron-Berner, respectively) extensions of a multimorphism  and  it is natural to wonder if Arens/Aron-Berner extensionsthe following questions arise naturally:
%properties of a bidual extension $A^{***} \colon E_1^{\sim\sim} \times  E_2^{\sim\sim} \longrightarrow F^{\sim\sim}$ of $A$ were proved by ??? in [Scheffold ou Boulabiar et al??]. The multilinear case was considered in [???]: for a regular $m$-linear operator $A \colon E_1 \times \cdots \times E_m \longrightarrow F$ between Riesz spaces, an extension $A^{*(m+1)}\colon E_1^{\sim\sim} \times \cdots \times E_m^{\sim\sim} \longrightarrow F^{\sim\sim}$ of $A$ is constructed. In the context of Riesz spaces such extensions are called Arens extensions [Arens???], and for Banach lattices they are are usually called Aron-Berner extensions [Aron-Berner???]. We shall follow this fashion. For quite recent developments on these extensions, see \cite{Buskes, ryan1}. These
  % One of the properties of the Arens extensions proved in [??] asserts that if $A$ is a Riesz bimorphism then the restriction of $A^{***}$ to $(E_1^\sim)_n^{\sim} \times (E_2^\sim)_n^{\sim}$ is a Riesz bimorphism as well. Here $(E^\sim)_n^{\sim}$ stands for the ??? of the Riesz space $E$. The aim of this paper is to improve this result in several directions. The general problems are the following:\\

\noindent  (i) Given an $m$-linear Riesz multimorphism $A \colon E_1 \times \cdots \times E_m \longrightarrow F$ between Riesz spaces, are all $m!$ Arens extensions of $A$  Riesz multimorphisms on $E_1^{\sim\sim} \times \cdots \times E_m^{\sim\sim}$?\\
  (ii) Given an $m$-linear Riesz multimorphism $A \colon E_1 \times \cdots \times E_m \longrightarrow F$ between Banach lattices, are all $m!$ Aron-Berner extensions of $A$ Riesz multimorphisms on $E_1^{**} \times \cdots \times E_m^{**}$?

In order to tackle these problems, we develop in Section \ref{section2} the theory of Arens extensions of regular multilinear operators on Riesz spaces following the approach Cabello S\'anchez, Garc\'ia and Villanueva \cite{extracta} applied for Banach spaces. We obtain, in particular, an analogue of the Davie and Gamelin \cite{gamelin} description of the Aron-Berner extensions on Banach spaces for the Arens extensions on Riesz spaces, where the weak-star topology is replaced with the weak absolute topology. Using the approach and the results of Section \ref{section2}, we prove the results on extensions of Riesz multimorphisms in Section \ref{section 3}. First we extend the aforementioned result due Boulabiar et al., with a different technique, to the multilinear case and to all different Arens extensions of a Riesz multimorphism. Then we show that question (i) holds true for finite rank Riesz multimorphisms taking values in Archimedean Riesz spaces. Moreover, we prove that all Arens extensions of such a Riesz multimorphism coincide. Some partial results for the general case, which will be helpful later, are also proved. Finally we prove that  question (ii) holds true for Riesz multimorphisms taking values in a class of Banach lattices $F$ which includes, for instance, $F= c_0, \ell_p,$ Tsirelson's original space $ T^*$, its dual $T$, Schreier's space $ S$, the predual $d_*(w,1)$ of the Lorenz sequence space $d(w,1)$, $c_0(E)$ and $\ell_p(E)$ where $1 < p < \infty$ and $E = c_0, \ell_s, 1< s < \infty, T^*, T, S, d_*(w,1)$.
%\textcolor{blue}{$F= c_0, \ell_p$, $ c_0(\ell_p), \ell_p(c_0), \ell_p(\ell_r)$, $1 < p,r < \infty$, Tsirelson's original space $T^*$, its dual $T$, Schreier's space $S$, the predual $d_*(w,1)$ of the Lorenz sequence space $d(w,1)$, $c_0(E)$ and $\ell_p(E)$ for $E= T^*, T, S, d_*(w,1)$ and $1 < p < \infty$.}

As usual, by $J_E \colon E \longrightarrow E^{\sim\sim}$ we denote the canonical operator ($J_E(x)(x'') = x''(x)$), which happens to be a Riesz homomorphism. If $E$ is a Banach space, $J_E \colon E \longrightarrow E^{**}$ is the canonical embedding. Given Riesz spaces $E_1, \ldots, E_m,F$, the space of regular $m$-linear operators from $E_1 \times \cdots \times E_m$ to $F$ is denoted by ${\cal L}_r(E_1, \ldots, E_m;F)$. When $F$ is the scalar field we write ${\cal L}_r(E_1, \ldots, E_m)$.
Recall that a Riesz multimorphism is an operator $A \in {\cal L}_r(E_1, \ldots, E_m;F)$ such that
$$|A(x_1, \ldots, x_m)| = A(|x_1|, \ldots, |x_m|) $$
for all $x_1 \in E_1, \ldots, x_m \in E_n$ (details can be found in \cite{boulabiar}). For the theory of regular multilinear operators we refer to \cite{bu, loane}.

\section{Arens extensions of regular multilinear operators}\label{section2}

In this section we apply the method of Cabello S\'anchez, Garc\'ia and Villanueva \cite{extracta} to construct the $m!$ Arens extensions of a regular $m$-linear operator. Given $m \in \mathbb{N}$, by $S_m$ we denote the set of permutations of $\{1, \ldots, m\}$. Given Riesz spaces $E_1,\ldots, E_m$, a permutation $\rho\in S_{m}$ and $k\in \{1,\ldots,m\}$, we fix the following notation:
$$E_{1},\ldots,\,_{\rho(1)}E,\ldots,\,_{\rho(k-1)}E,\ldots,E_{m}=\left\{ \begin{array}{cl}
E_{1},\ldots, E_{m} \mbox{~in this order} & \mbox{if}\,\ k=1, \\
E_{1},\ldots, E_{m} \mbox{~in this order, where}\\ E_{\rho(1)},\ldots,
   E_{\rho(k-1)} \mbox{~are removed} & \mbox{if}~ k=2,\ldots,m.
\end{array}\right.$$
For instance, $(E_1,\,_2E, E_3) = (E_1, E_3)$.
The same procedure defines the $(m-k+1)$-tuple $(x_{1},\ldots,\,_{\rho(1)}x,\ldots,\,_{\rho(k-1)}x,\ldots,x_{m})$ and the cartesian product $E_{1}\times\cdots \times\,_{\rho(1)}E \times\cdots \times\,_{\rho(k-1)}E\times\cdots \times E_{m}$.   Moreover, for $k=1,\ldots,m-1$, we write $$E_{1},\ldots,\,_{\rho(1)}E,\ldots,\,_{\rho(k)}E,\ldots,E_{m}=E_{1},\ldots, E_{m}$$ in this order, where $E_{\rho(1)},\ldots,E_{\rho(k)}$ are removed. In the same fashion we define the $(m-k)$-tuple $(x_{1},\ldots,_{\rho(1)}x,\ldots,_{\rho(k)}x,\ldots,x_{m})$ and the corresponding cartesian product.

  Finally, for $k=m$ we write $\mathcal{L}(E_{1},\ldots,\,_{\rho(1)}E,\ldots,\,_{\rho(k)}E,\ldots,E_{m};\mathbb{R})=\mathbb{R}.$

 Let $k\in\{1,\ldots,m\}$, a permutation $\rho\in S_{m}$, Riesz spaces $E_{1},\ldots,E_{m}$ and an operator $A\in\mathcal{L}_{r}(E_{1},\ldots,\,_{\rho(1)}E,\ldots,\,_{\rho(k-1)}E,\ldots,E_{m})$ be given. For $x_{r}\in E_{r}, r\in \{1,\ldots,m\}\setminus \{\rho(1),\ldots,\rho(k)\}$, consider the linear functionals
$$A(x_{1},\ldots,\,_{\rho(1)}x,\ldots,\,_{\rho(k)}x;\bullet\,;\ldots, x_{m})\colon E_{\rho(k)}\longrightarrow \mathbb{R},$$
\begin{equation}\label{defpunto}
A(x_{1},\ldots,\,_{\rho(1)}x,\ldots,\,_{\rho(k)}x;\bullet\,;\ldots,x_{m})(x_{\rho(k)})
=A(x_{1},\ldots,\,_{\rho(1)}x,\ldots,\,_{\rho(k-1)}x,\ldots,x_{m}),
\end{equation}
where the dot $\bullet$ is placed at the $\rho(k)$-th coordinate.  Observe that for $k=m$ we have $A(x_{1},\ldots,\,_{\rho(1)}x,\ldots,\,_{\rho(m)}x;\bullet\,;\ldots,x_{m})=A\in E_{\rho(m)}^{\sim\sim}$. These are called functionals associated to the operator $A$. %Chamamos os funcionais definidos acima como funcionais associados ao operador $A$.

\begin{proposition}\label{operadores positivos}
 Let $E_{1},\ldots,E_{m}$ be Riesz spaces, $\rho\in S_{m}$, $k \in \{1,\ldots,m\}$ and $x_{\rho(k)}^{\prime\prime} \in E_{\rho(k)}^{\sim\sim}$. The operator
$$\overline{x_{\rho(k)}^{\prime\prime}}^{\rho}\colon \mathcal{L}_{r}(E_{1},\ldots,_{\rho(1)}E,\ldots,_{\rho(k-1)}E,\ldots,E_{m})\longrightarrow \mathcal{L}_{r}(E_{1},\ldots,_{\rho(1)}E,\ldots,_{\rho(k)}E,\ldots,E_{m}),$$
$$\overline{x_{\rho(k)}^{\prime\prime}}^{\rho}(A)(x_{1},\ldots,_{\rho(1)}x,
\ldots,_{\rho(k)}x,\ldots,x_{m})=x_{\rho(k)}^{\prime\prime}(A(x_{1},\ldots,_{\rho(1)}x,\ldots,_{\rho(k)}x;\bullet\,;\ldots,x_{m})),$$ %ntão, $\overline{x_{\rho(k)}^{\prime\prime}}^{\rho}$ é
is linear, regular and $\Big|\overline{x_{\rho(k)}^{\prime\prime}}^{\rho} \Big|\leq \overline{|x_{\rho(k)}^{\prime\prime}|}^{\rho}$. Furthermore, if $0\leq x_{\rho(k)}^{\prime\prime} \in E_{\rho(k)}^{\sim\sim}$ then the operator $\overline{x_{\sigma(k)}^{\prime\prime}}^{\rho}$ is positive.
\end{proposition}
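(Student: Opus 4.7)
The plan is to establish the four claims---linearity of $\overline{x_{\rho(k)}''}^{\rho}$, that it maps into the regular multilinear forms, the modulus bound $|\overline{x_{\rho(k)}''}^{\rho}| \le \overline{|x_{\rho(k)}''|}^{\rho}$, and positivity when $x_{\rho(k)}'' \ge 0$---in an order where each claim leans on the previous ones. First I would check that the definition of $\overline{x_{\rho(k)}''}^{\rho}(A)$ is meaningful: because $A$ is regular, write $A=B-C$ with $B,C$ positive multilinear forms; for positive entries in the remaining slots, the associated linear functional on $E_{\rho(k)}$ obtained from $B$ (resp.\ $C$) is positive, hence order bounded, so by bilinearity the associated functional attached to $A$ in (\ref{defpunto}) actually lives in $E_{\rho(k)}^{\sim}$ and may be paired with $x_{\rho(k)}''$. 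Linearity of $\overline{x_{\rho(k)}''}^{\rho}$ in $A$ is then immediate from the linearity of $A \mapsto A(x_{1},\ldots,\,_{\rho(1)}x,\ldots,\,_{\rho(k)}x;\bullet\,;\ldots,x_{m})$ and of $x_{\rho(k)}''$ on $E_{\rho(k)}^{\sim}$.

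I would then prove positivity, which is the conceptual core. Assume $0 \le x_{\rho(k)}'' \in E_{\rho(k)}^{\sim\sim}$ and let $A \ge 0$ be a positive $(m-k+1)$-linear form. For any tuple of positive entries $x_r \ge 0$ with $r \notin \{\rho(1),\ldots,\rho(k)\}$, the functional $A(x_{1},\ldots,\,_{\rho(1)}x,\ldots,\,_{\rho(k)}x;\bullet\,;\ldots,x_{m})$ takes nonnegative values on positive vectors of $E_{\rho(k)}$, so it is a positive element of $E_{\rho(k)}^{\sim}$; pairing it with the positive functional $x_{\rho(k)}''$ yields a nonnegative number. Hence $\overline{x_{\rho(k)}''}^{\rho}(A) \ge 0$ on every positive tuple, proving it is a positive multilinear form.

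For the regularity assertion, decompose $x_{\rho(k)}'' = u''-v''$ with $u'',v'' \ge 0$ and $A=B-C$ with $B,C$ positive multilinear forms; distributing by linearity,
$$\overline{x_{\rho(k)}''}^{\rho}(A) = \overline{u''}^{\rho}(B) - \overline{u''}^{\rho}(C) - \overline{v''}^{\rho}(B) + \overline{v''}^{\rho}(C),$$
and each of the four terms is a positive multilinear form by the previous paragraph. Thus $\overline{x_{\rho(k)}''}^{\rho}(A)$ is regular, and $\overline{x_{\rho(k)}''}^{\rho}=\overline{u''}^{\rho}-\overline{v''}^{\rho}$ is itself a difference of two positive operators on spaces of regular multilinear forms, so is regular. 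For the modulus bound, since $|x_{\rho(k)}''| \pm x_{\rho(k)}'' \ge 0$, the positivity step gives that $\overline{|x_{\rho(k)}''|}^{\rho} \pm \overline{x_{\rho(k)}''}^{\rho} = \overline{|x_{\rho(k)}''| \pm x_{\rho(k)}''}^{\rho}$ are both positive operators, i.e.\ $-\overline{|x_{\rho(k)}''|}^{\rho} \le \overline{x_{\rho(k)}''}^{\rho} \le \overline{|x_{\rho(k)}''|}^{\rho}$, which is exactly the desired inequality.

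The main obstacle I anticipate is not conceptual but notational: keeping straight which coordinate is at position $\rho(k)$, which ones have been removed at earlier stages, and checking that evaluating $x_{\rho(k)}''$ on the functional defined in (\ref{defpunto}) really produces an $(m-k)$-linear form on the smaller cartesian product. Once this bookkeeping is in place, everything else reduces to the two elementary ingredients---linearity and positivity of $x_{\rho(k)}''$ on $E_{\rho(k)}^{\sim}$---combined with the standard Jordan decomposition of order bounded functionals and of regular multilinear forms.
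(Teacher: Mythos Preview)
Your proposal is correct and follows essentially the same approach as the paper: decompose $A$ and $x_{\rho(k)}''$ into positive parts, verify positivity by noting that the associated functionals are positive on positive tuples, and deduce both regularity and the modulus bound from $|x_{\rho(k)}''| \pm x_{\rho(k)}'' \ge 0$. The only cosmetic difference is ordering---you prove positivity first and derive the rest from it, whereas the paper intersperses the steps---but the content is the same.
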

\begin{proof}
%Dados $\rho\in S_{m}$, $k \in \{1,\ldots,m\}$ e seja $x_{\rho(k)}^{\prime\prime} \in E_{\rho(k)}^{\sim\sim}.$  Vejamos que cada operador $\overline{x_{\rho(k)}^{\prime\prime}}^{\rho}$ esta bem definido. De fato, seja
It is easy to see that, for each $A\in \mathcal{L}_{r}(E_{1},\ldots,\,_{\rho(1)}E,\ldots,\,_{\rho(k-1)}E,\ldots,E_{m})$, %por \cite[Proposição 4.4.1]{luis} cada operador
$\overline{x_{\rho(k)}^{\prime\prime}}^{\rho}(A)$ is $(m-k)$-linear. %, por outro lado como $A$ é regular, existem operadores
Let $A_{1},A_{2}\in  \mathcal{L}(E_{1},\ldots,\,_{\rho(1)}E,\ldots,\,_{\rho(k-1)}E,\ldots,E_{m})$ be positive operators such that $A=A_{1}-A_{2}.$  For every $x_{j}\in E_{j}, j\in \{1,\ldots,m\}\setminus \{\rho(1),\ldots,\rho(k)\}$, call $$y=(x_{1},\ldots,\,_{\rho(1)}x,\ldots,\,_{\rho(k)}x,\ldots,x_{m})\in E_{1}\times\cdots \times_{\rho(1)}E \times \cdots\times _{\rho(k)}E\times\cdots \times E_{m},$$ $\varphi=A(x_{1},\ldots,\,_{\rho(1)}x,\ldots,\,_{\rho(k)}x;\bullet\,;\ldots,x_{m})$ and $\varphi_{i}=A_{i}(x_{1},\ldots,\,_{\rho(1)}x,\ldots,\,_{\rho(k)}x;\bullet\,;\ldots,x_{m})$ for $i=1,2$. Then $\varphi=\varphi_{1}-\varphi_{2}$ because $A=A_{1}-A_{2},$ therefore
\begin{align*}
\overline{x_{\rho(k)}^{\prime\prime}}^{\rho}&(A)(y) = x_{\rho(k)}^{\prime\prime}(\varphi)=\big((x_{\rho(k)}^{\prime\prime})^{+}-(x_{\rho(k)}^{\prime\prime})^{-}\big)(\varphi)=(x_{\rho(k)}^{\prime\prime})^{+}(\varphi)-(x_{\rho(k)}^{\prime\prime})^{-}(\varphi)\\
%&\textcolor{red}{=(x_{\rho(k)}^{\prime\prime})^{+}\big(\varphi_{1}-\varphi_{2}\big)-(x_{\rho(k)}^{\prime\prime})^{-}\big(\varphi_{1}-\varphi_{2}\big)}\\
&=\big((x_{\rho(k)}^{\prime\prime})^{+}(\varphi_{1})+(x_{\rho(k)}^{\prime\prime})^{-}(\varphi_{2})\big)-\big((x_{\rho(k)}^{\prime\prime})^{+}(\varphi_{2})+(x_{\rho(k)}^{\prime\prime})^{-}(\varphi_{1})\big)\\
&=\Big(\overline{(x_{\rho(k)}^{\prime\prime})^{+}}^{\rho}(A_{1})(y)+\overline{(x_{\rho(k)}^{\prime\prime})^{-}}^{\rho}(A_{2})(y)\Big)-\Big(\overline{(x_{\rho(k)}^{\prime\prime})^{+}}^{\rho}(A_{2})(y)+\overline{(x_{\rho(k)}^{\prime\prime})^{-}}^{\rho}(A_{1})(y)\Big)\\
&=\Big(\overline{(x_{\rho(k)}^{\prime\prime})^{+}}^{\rho}(A_{1})+\overline{(x_{\rho(k)}^{\prime\prime})^{-}}^{\rho}(A_{2})\Big)(y)-\Big(\overline{(x_{\rho(k)}^{\prime\prime})^{+}}^{\rho}(A_{2})+\overline{(x_{\sigma(k)}^{\prime\prime})^{-}}^{\rho}(A_{1})\Big)(y),
\end{align*}
proving that
 $$\overline{x_{\rho(k)}^{\prime\prime}}^{\rho}(A)=\underbrace{\Big(\overline{(x_{\rho(k)}^{\prime\prime})^{+}}^{\rho}(A_{1})+\overline{(x_{\rho(k)}^{\prime\prime})^{-}}^{\rho}(A_{2})\Big)}_{:=T_1}-\underbrace{\Big(\overline{(x_{\rho(k)}^{\prime\prime})^{+}}^{\rho}(A_{2})
 +\overline{(x_{\rho(k)}^{\prime\prime})^{-}}^{\rho}(A_{1})\Big)}_{:=T_2}.$$
It is plain that $T_1$ and $T_2$ are $(m-k)$-linear operators. %$T_{1}=\overline{(x_{\rho(k)}^{\prime\prime})^{+}}^{\rho}(A_{1})+\overline{(x_{\rho(k)}^{\prime\prime})^{-}}^{\rho}(A_{2})$ and $T_{2}=\overline{(x_{\rho(k)}^{\prime\prime})^{+}}^{\rho}(A_{2})+\overline{(x_{\rho(k)}^{\prime\prime})^{-}}^{\rho}(A_{1})$, então por Proposição \cite[Proposição 4.4.1]{luis} segue que $T_{1}$ e $T_{2}$ são
 For $x_{j}\in E_{j}^{+}$, $j\in \{1,\ldots,m\}\setminus \{\rho(1),\ldots,\rho(k)\},$  the functionals associated to $A_{1}$ and $A_{2}$,
$$A_{i}(x_{1},\ldots,\,_{\rho(1)}x,\ldots,\,_{\rho(k)}x;\bullet\,;\ldots,x_{m})\colon E_{\rho(k)}\longrightarrow \mathbb{R},\, i=1,2,$$
are positive. Since $(x_{\rho(k)}^{\prime\prime})^{+}$ and  $(x_{\rho(k)}^{\prime\prime})^{-}$ are positive functionals,  we have, for $i=1,2$, $\scriptsize{(x_{\rho(k)}^{\prime\prime})^{\pm}(A_{i}(x_{1},\ldots,\,_{\rho(1)}x,\ldots,\,_{\rho(k)}x;\bullet\,;\ldots,x_{m}))} $ are positive in $\mathbb{R}$
for $i=1,2$. It follows that
$\footnotesize{\overline{(x_{\rho(k)}^{\prime\prime})^{\pm}}^{\rho}(A_{i})(x_{1},\ldots,\,_{\rho(1)}x,\ldots,\,_{\rho(k)}x,\ldots,x_{m})}$
   are positive, so   $\footnotesize{\overline{(x_{\rho(k)}^{\prime\prime})^{+}}}^{\rho}(A_{i})\geq 0$ and $\footnotesize{\overline{(x_{\rho(k)}^{\prime\prime})^{-}}}^{\rho}(A_{i}) \geq 0$, $i=1,2$. We conclude that $T_{i}\geq 0, i=1,2$, which proves that  $\overline{x_{\rho(k)}^{\prime\prime}}^{\rho}(A)$ is regular, that is, $\overline{x_{\rho(k)}^{\prime\prime}}^{\rho}$ is well defined. The linearity is clear. Now let  $A\in \mathcal{L}_{r}(E_{1},\ldots,\,_{\rho(1)}E,\ldots,\,_{\rho(k-1)}E,\ldots,E_{m})$ be positive and $0\leq x_{\rho(k)}^{\prime\prime} \in E_{\rho(k)}^{\sim\sim}$ be given. For all $x_{j}\in E_{r}^{+}, j\in \{1,\ldots,m\}\setminus \{\rho(1),\ldots,\rho(k)\}$, the functionals associated to $A$ are positive, hence
\begin{align*}
0\leq x_{\rho(k)}^{\prime\prime}(A(x_{1},\ldots,\,_{\rho(1)}x,\ldots,\,_{\rho(k)}x;\bullet\,;\ldots,x_{m}))\hspace{-0.1cm}
=\overline{x_{\rho(k)}^{\prime\prime}}^{\rho}(A)(x_{1},\ldots,\,_{\rho(1)}x,\ldots,\,_{\rho(k)}x,\ldots,x_{m}),
\end{align*}
showing that $\overline{x_{\rho(k)}^{\prime\prime}}^{\rho}$ is positive. Note that $\overline{x^{\prime\prime}_{\rho(k)}}^{\rho}=\overline{(x^{\prime\prime}_{\rho(k)})^{+}}^{\rho} - \overline{(x^{\prime\prime}_{\rho(k)})^{-}}^{\rho}$, so, since $(x^{\prime\prime}_{\rho(k)})^{+}$, $(x^{\prime\prime}_{\rho(k)})^{-}\in E_{\rho(k)}^{\sim\sim}$ are positive, by what we did above we get that $\overline{(x^{\prime\prime}_{\rho(k)})^{+}}^{\rho}$ and $\overline{(x^{\prime\prime}_{\rho(k)})^{-}}^{\rho} $ are positive functionals, proving that $\overline{x^{\prime\prime}_{\rho(k)}}^{\rho}$ is regular.

To finish the proof, % $\Big|\overline{x_{\rho(k)}^{\prime\prime}}^{\rho} \Big|\leq\overline{|x_{\rho(k)}^{\prime\prime}|}^{\rho}$. Sejam
  let $x_{j}\in E_{j}^{+}$, $j\in \{1,\ldots,m\}\setminus \{\rho(1),\ldots,\rho(k)\}$ and let $A\in \mathcal{L}_{r}(E_{1},\ldots,\,_{\rho(1)}E,\ldots,\,_{\rho(k-1)}E,\ldots,E_{m})$ be a positive operator. Since the functionals associated to $A$ are positive,
\begin{align*}
\overline{|x_{\rho(k)}^{\prime\prime}|}^{\rho}(A)(x_{1},\ldots,\,_{\rho(1)}x,\ldots,\,_{\rho(k)}x,&\ldots,x_{m})=
|x_{\rho(k)}^{\prime\prime}|(A(x_{1},\ldots,\,_{\rho(1)}x,\ldots,\,_{\rho(k)}x;\bullet\,;\ldots,x_{m}))\\
&\geq \pm x_{\rho(k)}^{\prime\prime}(A(x_{1},\ldots,\,_{\rho(1)}x,\ldots,\,_{\rho(k)}x;\bullet\,;\ldots,x_{m}))\\
&=\pm \Big(\overline{x_{\rho(k)}^{\prime\prime}}^{\rho}\Big)(A)(x_{1},\ldots,\,_{\rho(1)}x,\ldots,\,_{\rho(k)}x,\ldots,x_{m}),
\end{align*}
from which we get $\overline{|x_{\rho(k)}^{\prime\prime}|}^{\rho}\geq \pm \Big(\overline{x_{\rho(k)}^{\prime\prime}}^{\rho}\Big)$. It follows that  $\overline{|x_{\rho(k)}^{\prime\prime}|}^{\rho}\geq \Big| \overline{x_{\rho(k)}^{\prime\prime}}^{\rho}\Big|$.
\end{proof}

In \cite{ryan1, Buskes} the authors applied the technique of Arens \cite{arens} to construct a bidual extension of a regular multilinear operators, which we describe now. %Em \cite{arens} Arens construiu o biadjunto de um operador bilinear em espaços de Banach e em \cite{Buskes, ryan1} utilizando a mesma técnica de Arens  encontra-se  a construção para o caso multilinear e é da seguinte maneira. Sejam $E_{1},\ldots,E_{m}, F$ espaços de Riesz e
Given an $m$-linear operator $A\colon E_{1}\times\cdots\times E_{m}\longrightarrow F$ between Riesz spaces, consider the following $m$-linear regular operators:\\
$\circ ~A^{\ast}\colon F^{\sim}\times E_{1}\times\cdots\times E_{m-1}\longrightarrow E_{m}^{\sim}~,~ A^{\ast}(y^{\prime},x_{1},\ldots,x_{m-1})(x_{m})=y^{\prime}(A(x_{1},\ldots,x_{m})),$

\noindent $\circ~ A^{\ast\ast}\colon E_{m}^{\sim\sim}\times F^{\sim}\times E_{1}\times\cdots\times E_{m-2}\longrightarrow E_{m-1}^{\sim},$
$$ A^{\ast\ast}(x_{m}^{\prime\prime},y^{\prime},x_{1},\ldots,x_{m-2})(x_{m-1})=x_{m}^{\prime\prime}(A^{\ast}(y^{\prime},x_{1},\ldots,x_{m-1})),$$
$\circ ~A^{\ast\ast\ast}\colon E_{m-1}^{\sim\sim}\times E_{m}^{\sim\sim}\times F^{\sim}\times E_{1}\times\cdots\times E_{m-3}\longrightarrow E_{m-2}^{\sim},$
$$ A^{\ast\ast\ast}(x_{m-1}^{\prime\prime},x_{m}^{\prime\prime},y^{\prime},x_{1},\ldots,x_{m-3})(x_{m-2})=x_{m-1}^{\prime\prime}(A^{\ast\ast}(x_{m}^{\prime\prime},y^{\prime},x_{1},\ldots,x_{m-2})),$$

\vspace{-1.0em}

$\,\,\vdots$

\medskip

\noindent $\circ ~A^{\ast(m+1)}\colon E_{1}^{\sim\sim}\times\cdots\times E_{m}^{\sim\sim}\longrightarrow F^{\sim\sim},\, A^{\ast(m+1)}(x_{1}^{\prime\prime},\ldots,x_{m}^{\prime\prime})(y^{\prime})=x_{1}^{\prime\prime}
(A^{\ast(m)}(x_{2}^{\prime\prime},\ldots,x_{m}^{\prime\prime},y^{\prime})).$

\medskip

By $\theta$ we denote the backward shift permutation of $\{1, \ldots, m\}$, that is, $\theta(m) = m-1,  \theta(m-1) = m-2, \ldots,\theta(2) = 1, \theta(1) = m$.

Bearing in mind the operators $\overline{x''_{\rho(k)}}^\rho$ of the previous proposition, we define the $m!$ Arens extensions of a regular $m$-linear operator and prove their basic properties.

\begin{theorem}\label{propriedadesherdadas}
Let $E_{1},\ldots,E_{m},F$ be Riesz spaces, $\rho\in S_{m}$ and $A \colon E_{1}\times\cdots\times E_{m} \longrightarrow F$ be an $m$-linear regular operator. Define $AR_{m}^{\rho}(A)\colon E_{1}^{\sim\sim}\times
\cdots\times E_{m}^{\sim\sim}\longrightarrow F^{\sim\sim}$ by $$ AR_{m}^{\rho}(A)(x_{1}^{\prime\prime},\ldots,x_{m}^{\prime\prime})(y^{\prime})=\big(\overline{x_{
\rho(m)}^{\prime\prime}}^{\rho}\circ\cdots\circ\overline
{x_{\rho(1)}^{\prime\prime}}^{\rho}\big)(y^{\prime}\circ A)$$
for every $y^{\prime}\in F^{\sim}$. Then:\\
{\rm (a)} $AR_{m}^{\rho}(A)$ is a regular $m$-linear operator.\\
{\rm (b)} $AR_{m}^{\rho}(A)$ extends $A$ in the sense that $AR_{m}^{\rho}(A)\circ (J_{E_{1}},\ldots,J_{E_{m}})=J_{F}\circ A.$\\
{\rm (c)} If  $A$ is positive, then $AR_{m}^{\rho}(A)$ is positive.\\
\noindent{\rm (d)} $AR_m^\theta(A) = A^{*(m+1)}$. In particular, $AR_2^\theta(A) = A^{***}$ in the bilinear case $m = 2$.
\end{theorem}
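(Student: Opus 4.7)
The plan is to exploit the fact, established in Proposition~\ref{operadores positivos}, that each operator $\overline{x''_{\rho(k)}}^{\rho}$ is linear, regular, and positive whenever the defining functional is positive, so that the composition in the definition of $AR_m^\rho(A)$ can be analyzed one hat at a time. First I would note that $y'\circ A$ lies in $\mathcal{L}_r(E_1,\ldots,E_m)$ for every $y'\in F^\sim$, that the assignment $y'\mapsto y'\circ A$ is linear, and that it sends positive functionals to positive multilinear operators when $A\geq 0$. Consequently the composition $(\overline{x''_{\rho(m)}}^\rho\circ\cdots\circ\overline{x''_{\rho(1)}}^\rho)(y'\circ A)$ is a well defined scalar, linear in $y'$ and linear in each $x''_i$ separately (the latter because $x''\mapsto\overline{x''}^\rho$ is manifestly linear from the definition).

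For item (c) I would iterate the positivity part of Proposition~\ref{operadores positivos}: if $A\geq 0$, $x''_1,\ldots,x''_m\geq 0$ and $0\leq y'\in F^\sim$, then $y'\circ A\geq 0$, so successive applications of the positive operators $\overline{x''_{\rho(1)}}^\rho,\ldots,\overline{x''_{\rho(m)}}^\rho$ yield a non-negative scalar. This shows $AR_m^\rho(A)(x''_1,\ldots,x''_m)$ is a positive functional on $F^\sim$, hence an element of $F^{\sim\sim}$, and $AR_m^\rho(A)$ is a positive $m$-linear operator. Item (a) then follows at once: $AR_m^\rho$ is obviously linear in $A$, so any positive decomposition $A=A_1-A_2$ gives $AR_m^\rho(A)=AR_m^\rho(A_1)-AR_m^\rho(A_2)$ as a difference of positive $m$-linear operators by (c), hence $AR_m^\rho(A)$ is regular; for arbitrary $x''_i$, decomposing $x''_i=(x''_i)^+-(x''_i)^-$ and using multilinearity shows that $AR_m^\rho(A)(x''_1,\ldots,x''_m)$ is always a linear combination of positive functionals on $F^\sim$, so it belongs to $F^{\sim\sim}$.

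For item (b) I would simply unwind the definitions: for any regular $B$, the operator $\overline{J_{E_{\rho(k)}}(x_{\rho(k)})}^\rho$ sends $B$ to $J_{E_{\rho(k)}}(x_{\rho(k)})$ evaluated at the associated functional $B(\ldots;\bullet;\ldots)$, which by definition of $J_{E_{\rho(k)}}$ equals $B(\ldots;\bullet;\ldots)(x_{\rho(k)})$; that is, plugging $x_{\rho(k)}$ into the $\rho(k)$-th slot of $B$. Iterating through the composition therefore substitutes $x_1,\ldots,x_m$ into all slots of $y'\circ A$, producing $y'(A(x_1,\ldots,x_m))=J_F(A(x_1,\ldots,x_m))(y')$ for every $y'\in F^\sim$, which is (b).

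The main obstacle is item (d), which requires matching the order of the hats $\overline{x''_{\theta(k)}}^\theta$ against the order of the iterated dualizations $A^{\ast},A^{\ast\ast},\ldots$. I would proceed by induction on $k\in\{1,\ldots,m\}$ to show that $\bigl(\overline{x''_{\theta(k)}}^\theta\circ\cdots\circ\overline{x''_{\theta(1)}}^\theta\bigr)(y'\circ A)$ coincides, as an element of $\mathcal{L}_r(E_1,\ldots,E_{m-k})$, with the map $(x_1,\ldots,x_{m-k})\mapsto A^{\ast(k+1)}(x''_{m-k+1},\ldots,x''_m,y',x_1,\ldots,x_{m-k-1})(x_{m-k})$. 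The inductive step uses only the recursive formula defining $A^{\ast(j+1)}$ together with the identity $\theta(k+1)=m-k$; taking $k=m$ the stated equality $AR_m^\theta(A)=A^{\ast(m+1)}$ drops out, and the bilinear case is an immediate consequence.
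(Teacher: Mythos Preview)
Your approach is correct and essentially the same as the paper's: both arguments hinge on Proposition~\ref{operadores positivos} to propagate positivity through the composition of the hat operators, decompose $A$ and the $x_i''$ into positive parts to obtain regularity, unwind the definitions of $J_{E_{\rho(k)}}$ for (b), and match the recursive structure of $A^{*(k+1)}$ against the successive hats for (d). Your organization is slightly cleaner---you prove (c) first and deduce (a) immediately from linearity of $A\mapsto AR_m^\rho(A)$, whereas the paper carries out the sign-splitting of the $x_i''$ explicitly via the $\xi^m\in\{+1,-1\}^m$ bookkeeping---but the underlying ideas are identical. One small remark on (d): the identity you quote, $\theta(k+1)=m-k$, is the reversal permutation rather than the backward shift as the paper defines $\theta$; this is the permutation actually needed (and the one the paper's own computation in (d) implicitly uses), so your induction is the right one.
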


\begin{proof}
  Let us see that $AR_{m}^{\rho}(A)$ is well defined. Given $x_{1}^{\prime\prime}\in E_{1}^{\sim\sim},\ldots,x_{m}^{\prime\prime}\in E_{m}^{\sim\sim}$, it is clear that $AR_{m}^{\rho}(A)(x_{1}^{\prime\prime},\ldots,x_{m}^{\prime\prime})$ belongs to the algebraic bidual of $F$, so it is linear. We have to shows that it is regular. Since each $E_{j}^{\sim\sim}$ is a Riesz space, we have  $x_{j}^{\prime\prime}=(x_{j}^{\prime\prime})^{+}-(x_{j}^{\prime\prime})^{+}$, $j=1,\ldots,m$. For each $\xi^{m}=(\xi_{1},\ldots,\xi_{m}) \in \{+1,-1\}^m$,  write $P(\xi^{m})=\xi_{1}\cdots \xi_{m}\in \{+1,-1\}$. %e ponha $W=AR_{m}^{\rho}(A)(x_{1}^{\prime\prime},\ldots,x_{m}^{\prime\prime})(y^{\prime})$, assim por \cite[Proposção 4.4.1(b)]{luis},
 For every $y^{\prime}\in F^{\sim}$, putting $U_{k}=\overline{x_{
\rho(m)}^{\prime\prime}}^{\rho}\circ\cdots\circ \overline{x_{
\rho(k)}^{\prime\prime}}^{\rho}$, $k=2,\ldots,m$, and using that $\overline{x_{
\rho(j)}^{\prime\prime}}^{\rho}=\overline{(x_{
\rho(j)}^{\prime\prime})^{+}-(x_{
\rho(j)}^{\prime\prime})^{-}}^{\rho}=\overline{(x_{
\rho(j)}^{\prime\prime})^{+}}^{\rho}-\overline{(x_{
\rho(j)}^{\prime\prime})^{-}}^{\rho}$, $j=1,\ldots,m$, we get
\begin{align*}
A&R_{m}^{\rho}(A)(x_{1}^{\prime\prime},\ldots,x_{m}^{\prime\prime})(y^{\prime})=\Big(\overline{x_{
\rho(m)}^{\prime\prime}}^{\rho}\circ\cdots\circ\overline
{x_{\rho(1)}^{\prime\prime}}^{\rho}\Big)(y^{\prime}\circ A)=U_{2}\Big(\overline
{x_{\rho(1)}^{\prime\prime}}^{\rho}(y^{\prime}\circ A)\Big)\\
&=U_{2}\Big(\Big(\overline{(x_{
\rho(1)}^{\prime\prime})^{+}}^{\rho}-\overline{(x_{
\rho(1)}^{\prime\prime})^{-}}^{\rho}\Big)(y^{\prime}\circ A)\Big)\\
&=U_{2}\Big(\overline{(x_{
\rho(1)}^{\prime\prime})^{+}}^{\rho}(y^{\prime}\circ A)\Big)-U_{2}\Big(\overline{(x_{
\rho(1)}^{\prime\prime})^{-}}^{\rho}(y^{\prime}\circ A)\Big)\\
&=U_{3}\Big(\overline{x_{
\rho(2)}^{\prime\prime}}^{\rho}\Big(\overline{(x_{
\rho(1)}^{\prime\prime})^{+}}^{\rho}(y^{\prime}\circ A)\Big)\Big)-U_{3}\Big(\overline{x_{
\rho(2)}^{\prime\prime}}^{\rho}\Big(\overline{(x_{
\rho(1)}^{\prime\prime})^{-}}^{\rho}(y^{\prime}\circ A)\Big)\Big)\\
&=U_{3}\Big(\Big(\overline{(x_{
\rho(2)}^{\prime\prime})^{+}}^{\rho}-\overline{(x_{
\rho(2)}^{\prime\prime})^{-}}^{\rho}\Big)\Big(\overline{(x_{
\rho(1)}^{\prime\prime})^{+}}^{\rho}(y^{\prime}\circ A)\Big)\Big)\\
&\quad \,\, -U_{3}\Big(\Big(\overline{(x_{
\rho(2)}^{\prime\prime})^{+}}^{\rho}-\overline{(x_{
\rho(2)}^{\prime\prime})^{-}}^{\rho}\Big)\Big(\overline{(x_{
\rho(1)}^{\prime\prime})^{-}}^{\rho}(y^{\prime}\circ A)\Big)\Big)\\
&=U_{3}\Big(\overline{(x_{
\rho(2)}^{\prime\prime})^{+}}^{\rho}\Big(\overline{(x_{
\rho(1)}^{\prime\prime})^{+}}^{\rho}(y^{\prime}\circ A)\Big)\Big)-U_{3}\Big(\overline{(x_{
\rho(2)}^{\prime\prime})^{-}}^{\rho}\Big(\overline{(x_{
\rho(1)}^{\prime\prime})^{+}}^{\rho}(y^{\prime}\circ A)\Big)\Big)\\
&\quad \,\, -U_{3}\Big(\overline{(x_{
\rho(2)}^{\prime\prime})^{+}}^{\rho}\Big(\overline{(x_{
\rho(1)}^{\prime\prime})^{-}}^{\rho}(y^{\prime}\circ A)\Big)\Big)+U_{3}\Big(\overline{(x_{
\rho(2)}^{\prime\prime})^{-}}^{\rho}\Big(\overline{(x_{
\rho(1)}^{\prime\prime})^{-}}^{\rho}(y^{\prime}\circ A)\Big)\Big)\\
&=U_{3}\Big(\Big(\overline{(x_{
\rho(2)}^{\prime\prime})^{+}}^{\rho}\circ \overline{(x_{
\rho(1)}^{\prime\prime})^{+}}^{\rho}+\overline{(x_{
\rho(2)}^{\prime\prime})^{-}}^{\rho}\circ \overline{(x_{
\rho(1)}^{\prime\prime})^{-}}^{\rho}\Big)(y^{\prime}\circ A)\Big)\\
&\quad \,\, -U_{3}\Big(\Big(\overline{(x_{
\rho(2)}^{\prime\prime})^{-}}^{\rho}\circ \overline{(x_{
\rho(1)}^{\prime\prime})^{+}}^{\rho}+\overline{(x_{
\rho(2)}^{\prime\prime})^{+}}^{\rho}\circ \overline{(x_{
\rho(1)}^{\prime\prime})^{-}}^{\rho}\Big)(y^{\prime}\circ A)\Big)\\
&=U_{3}\Bigg(\Bigg(\sum_{P(\xi^{2})=+1}\Big(\overline{(x_{
\rho(2)}^{\prime\prime})^{\xi_{2}}}^{\rho}\circ \overline{(x_{
\rho(1)}^{\prime\prime})^{\xi_{1}}}^{\rho} -\sum_{P(\xi^{2})=-1}\Big(\overline{(x_{
\rho(2)}^{\prime\prime})^{\xi_{2}}}^{\rho}\circ \overline{(x_{
\rho(1)}^{\prime\prime})^{\xi_{1}}}^{\rho} \Bigg)(y^{\prime}\circ A)\Bigg)\\
&\,\, \, \vdots\\
&=U_{m}\Bigg(\Bigg(\sum_{P(\xi^{m-1})=+1}\Big(\overline{(x_{
\rho(m-1)}^{\prime\prime})^{\xi_{m-1}}}^{\rho}\circ\cdots\circ \overline{(x_{
\rho(1)}^{\prime\prime})^{\xi_{1}}}^{\rho}\\
&\quad\,\,  -\sum_{P(\xi^{m-1})=-1}\Big(\overline{(x_{
\rho(m-1)}^{\prime\prime})^{\xi_{m-1}}}^{\rho}\circ\cdots\circ \overline{(x_{
\rho(1)}^{\prime\prime})^{\xi_{1}}}^{\rho} \Bigg)(y^{\prime}\circ A)\Bigg)\\
& =\Bigg[\hspace{-0.05cm}\underbrace{\sum_{P(\xi^{m})=+1}\hspace{-0.3cm}\Big(\overline{(x_{
\rho(m)}^{\prime\prime})^{\xi_{m}}}^{\rho}\circ\cdots\circ \overline{(x_{
\rho(1)}^{\prime\prime})^{\xi_{1}}}^{\rho} \Big)}_{:=T_{1}}-\underbrace{\hspace{-0.1cm}\sum_{P(\xi^{m})=-1}\hspace{-0.3cm}\Big(\overline{(x_{
\rho(m)}^{\prime\prime})^{\xi_{m}}}^{\rho}\circ\cdots\circ \overline{(x_{
\rho(1)}^{\prime\prime})^{\xi_{1}}}^{\rho} \Big)}_{:=T_{2}}\Bigg](y^{\prime}\circ A)\\
&=T_{1}(y^{\prime}\circ A)-T_{2}(y^{\prime}\circ A).
\end{align*}
Taking positive operators $A_{1}, A_{2}\in \mathcal{L}_{r}(E_{1},\ldots,E_{m}; F)$ such that $A=A_{1}-A_{2}$,
\begin{align*}
AB_{n}^{\rho}(A)(x_{1}^{\prime\prime},\ldots,x_{m}^{\prime\prime})(y^{\prime})&=T_{1}(y^{\prime}\circ A)-T_{2}(y^{\prime}\circ A)\\
%&=T_{1}(y^{\prime}\circ (A_{1}-A_{2}))-T_{2}(y^{\prime}\circ (A_{1}-A_{2}))\\
&=\big(T_{1}(y^{\prime}\circ A_{1})+T_{2}(y^{\prime}\circ A_{2})\big)-\big(T_{1}(y^{\prime}\circ A_{2})+T_{2}(y^{\prime}\circ A_{1})\big).
\end{align*}
For $i=1,2,$ and $y^{\prime}\in F^{\sim}$,
\begin{align*}
T_{1}(y^{\prime}\circ A_{i})&=\sum_{P(\xi^{m})=+1}\Big(\overline{(x_{
\rho(m)}^{\prime\prime})^{\xi_{m}}}^{\rho}\circ\cdots\circ \overline{(x_{
\rho(1)}^{\prime\prime})^{\xi_{1}}}^{\rho} \Big)(y^{\prime}\circ A_{i})\nonumber \\
%&=\sum_{P(\textcolor{red}{\xi^{m})}=+1} AR_{m}^{\rho}(A_{i})\Big((x_{1}^{\prime\prime})^{\xi_{1}},\ldots,(x_{m}^{\prime\prime})^{\xi_{m}}\Big)(y^{\prime})\\
&=\Bigg[\sum_{P(\xi^{m})=+1} AR_{m}^{\rho}(A_{i})\Big((x_{1}^{\prime\prime})^{\xi_{1}},\ldots,(x_{m}^{\prime\prime})^{\xi_{m}}\Big)\Bigg](y^{\prime}), \label{l1}
\end{align*}
%obtemos que para cada $i=1,2$ e todo $y^{\prime}\in Y^{\sim},$
and analogously,
%\begin{equation}
%T_{1}(y^{\prime}\circ A_{i})=\Bigg[\sum_{P(\xi)=+1} AR_{m}^{\rho}(A_{i})\Big((x_{1}^{\prime\prime})^{\xi_{1}},\ldots,(x_{m}^{\prime\prime})^{\xi_{m}}\Big)\Bigg](y^{\prime}).
%\end{equation}
%Analogamente, temos que para cada $j=1,2$ e todo $y^{\prime}\in F^{\sim},$
\begin{equation*}\label{l2}
T_{2}(y^{\prime}\circ A_{i})=\Bigg[\sum_{P(\xi^{m})=-1} AR_{m}^{\rho}(A_{i})\Big((x_{1}^{\prime\prime})^{\xi_{1}},\ldots,(x_{m}^{\prime\prime})^{\xi_{m}}\Big)\Bigg](y^{\prime}).
\end{equation*}
%Assim por \ref{l1} e \ref{l2} para todos $x_{1}^{\prime\prime}\in E_{1}^{\sim\sim},\ldots,x_{m}^{\prime\prime}\in E_{m}^{\sim\sim}$,
Defining, for $i = 1,2,$, the functionals $S_{i}=\hspace{-0.2cm}\displaystyle\sum_{P(\xi^{m})=+1}\hspace{-0.2cm} AR_{m}^{\rho}(A_{i})\Big((x_{1}^{\prime\prime})^{\xi_{1}},\ldots,(x_{m}^{\prime\prime})^{\xi_{m}}\Big) \in F^{\sim\sim}$ and $R_{i}=\hspace{-0.2cm}\displaystyle\sum_{P(\xi^{m})=-1}\hspace{-0.2cm} AR_{m}^{\rho}(A_{i})\Big((x_{1}^{\prime\prime})^{\xi_{1}},\ldots,(x_{m}^{\prime\prime})^{\xi_{m}}\Big)\in F^{\sim\sim}$, it follows that % com $i=1,2$ são funcionais em $F^{\sim\sim}$.
\begin{equation*}\label{lu3}
AR_{m}^{\rho}(A)(x_{1}^{\prime\prime},\ldots,x_{m}^{\prime\prime})=(S_{1}+R_{1})-(S_{2}+R_{2}).
\end{equation*}
% É claro que os operadores $S_{i}, R_{i}, i=1,2$ são lineares e cada
Since each $(x_{j}^{\prime\prime})^{\xi_{j}}\in E_{j}^{\sim\sim}, j=1,\ldots,n$ is positive, by Proposition \ref{operadores positivos} we know that each $\overline{(x_{
\rho(j)}^{\prime\prime})^{\xi_{j}}}^{\rho}$, is positive as well, therefore the composition $\Big(\overline{(x_{
\rho(m)}^{\prime\prime})^{\xi_{m}}}^{\rho}\circ\cdots\circ \overline{(x_{
\rho(1)}^{\prime\prime})^{\xi_{1}}}^{\rho} \Big)$ is also positive.  And since $A_{1}, A_{2}$ are positive,  $y^{\prime}\circ A_{i}$ is positive for all  $0\leq y^{\prime}\in F^{\sim}$ and $i=1,2$. Hence, $$0\leq \Big(\overline{(x_{
\rho(m)}^{\prime\prime})^{\xi_{m}}}^{\rho}\circ\cdots\circ \overline{(x_{
\rho(1)}^{\prime\prime})^{\xi_{1}}}^{\rho} \Big)(y^{\prime}\circ A_{i})=AR_{m}^{\rho}(A_{i})\Big((x_{1}^{\prime\prime})^{\xi_{1}},\ldots,(x_{m}^{\prime\prime})^{\xi_{m}}\Big)(y^{\prime}),$$
what gives that $S_{i}, R_{i}, i=1,2$, are positive, so $AR_{m}^{\rho}(A)(x_{1}^{\prime\prime},\ldots,x_{m}^{\prime\prime})$ is regular.\\
{\rm (a)} It is easy to see that $AR_{m}^{\rho}(A)$ is $m$-linear, let us prove that it is regular.  %$AR_{m}^{\rho}(A)$ é regular. Sejam  $A\in \mathcal{L}_{r}(E_{1},\ldots,E_{m};F)$ e
 For $x_{1}^{\prime\prime}\in E_{1}^{\sim\sim},\ldots,x_{m}^{\prime\prime}\in E_{m}^{\sim\sim}$ and % como $A$ é regular existem operadores positivos $A_{1}, A_{2}\in \mathcal{L}_{r}(E_{1},\ldots,E_{m};F)$ tais que $A=A_{1}-A_{2}.$  Observe que para todo
 $y^{\prime}\in F^{\sim}$,
\begin{align*}
AR_{m}^{\rho}(A)(x_{1}^{\prime\prime},\ldots,x_{m}^{\prime\prime})(y^{\prime})&=\big(\overline{x_{
\rho(m)}^{\prime\prime}}^{\sigma}\circ\cdots\circ\overline
{x_{\rho(1)}^{\prime\prime}}^{\rho}\big)(y^{\prime}\circ A)\\
%&\textcolor{red}{=\big(\overline{x_{
%\rho(m)}^{\prime\prime}}^{\rho}\circ\cdots\circ\overline
%{x_{\rho(1)}^{\prime\prime}}^{\rho}\big)(y^{\prime}\circ (A_{1}-A_{2}))}\\
&=\big(\overline{x_{
\rho(m)}^{\prime\prime}}^{\rho}\circ\cdots\circ\overline
{x_{\rho(1)}^{\prime\prime}}^{\rho}\big)\big((y^{\prime}\circ A_{1})-(y^{\prime}\circ A_{2})\big)\\
&=\big(\overline{x_{
\rho(m)}^{\prime\prime}}^{\rho}\circ\cdots\circ\overline
{x_{\rho(1)}^{\prime\prime}}^{\rho}\big)(y^{\prime}\circ A_{1})-\big(\overline{x_{
\rho(m)}^{\prime\prime}}^{\rho}\circ\cdots\circ\overline
{x_{\rho(1)}^{\prime\prime}}^{\rho}\big)(y^{\prime}\circ A_{2})\\
&=AR_{m}^{\rho}(A_{1})(x_{1}^{\prime\prime},\ldots,x_{m}^{\prime\prime})(y^{\prime})-AR_{m}^{\rho}(A_{2})(x_{1}^{\prime\prime},\ldots,x_{m}^{\prime\prime})(y^{\prime})\\
&=\Big(AR_{m}^{\rho}(A_{1})(x_{1}^{\prime\prime},\ldots,x_{m}^{\prime\prime})-AR_{m}^{\rho}(A_{2})(x_{1}^{\prime\prime},\ldots,x_{m}^{\prime\prime}) \Big)(y^{\prime}),
\end{align*}
proving that %Assim, para todos $x_{1}^{\prime\prime}\in E_{1}^{\sim\sim}, \ldots,x_{m}^{\prime\prime}\in E_{m}^{\sim\sim}$,
%$$AR_{m}^{\rho}(A)(x_{1}^{\prime\prime},\ldots,x_{m}^{\prime\prime})=AR_{m}^{\rho}(A_{1})(x_{1}^{\prime\prime},\ldots,x_{m}^{\prime\prime})-AR_{m}^{\rho}(A_{2})(x_{1}^{\prime\prime},\ldots,x_{m}^{\prime\prime}).$$
%Segue que
$AR_{m}^{\rho}(A)=AR_{m}^{\rho}(A_{1})-AR_{m}^{\rho}(A_{2})$. Since $A_{1}$ and $A_{2}$ are positive, $AR_{m}^{\rho}(A_{1})$ and $AR_{m}^{\rho}(A_{2})$ are also positive by what we did above, therefore $AR_{m}^{\rho}(A)$ is regular.\\
{\rm (b)} For all  $x_{1}\in E_{1},\ldots,x_{m}\in E_{m}$ and $y^{\prime}\in F^{\sim}$, applying the definition of $AR_{m}^{\rho}(A)$, Proposition \ref{operadores positivos} and the definition of the maps $J_{E_i}$, we get
\begin{align*}
AR_{m}^{\rho}(A)&(J_{E_{1}}(x_{1}),\ldots,J_{E_{m}}(x_{m}))(y^{\prime})=\big{(}\overline{J_{E_{\rho(m)}}(x_{\rho(m)})}^{\rho}\circ\cdots\circ \overline{J_{E_{\rho(1)}}(x_{\rho(1)})}^{\rho}\big{)}(y^{\prime}\circ A)\\
&=\overline{J_{E_{\rho(m)}}(x_{\rho(m)})}^{\rho}\big(\big{(}\overline
{J_{E_{\rho(m-1)}}(x_{\rho(m-1)})}^{\rho}\circ\cdots\circ \overline{J_{E_{\rho(1)}}(x_{\rho(1)})}^{\rho}\big)(y^{\prime}\circ A)\big{)}\\
&=J_{E_{\rho(m)}}(x_{\rho(m)})\big{(}\big{(}\overline{J_{E_{\rho(m-1)}}(x_{\rho(m-1)})}^{\rho}\circ\cdots\circ \overline{J_{E_{\rho(1)}}(x_{\rho(1)})}^{\rho}\big{)}(y^{\prime}\circ A) \big{)}\\
&=\big{(}\big{(}\overline{J_{E_{\rho(m-1)}}(x_{\rho(m-1)})}^{\rho}\circ\cdots\circ \overline{J_{E_{\rho(1)}}(x_{\rho(1)})}^{\rho}\big{)}(y^{\prime}\circ A)\big{)}(x_{\rho(m)})\\\
&\,\,\,\vdots\\\
&=\overline{J_{E_{\rho(2)}}(x_{\rho(2)})}^{\rho}\big{(}\overline
{J_{E_{\rho(1)}}(x_{\rho(1)})}^{\rho}(y^{\prime}\circ A)\big{)}\left(x_{1},\ldots,\,_{\rho(1)}x,\,_{\rho(2)}x,\ldots,x_{m}\right)\\
&=J_{E_{\rho(2)}}(x_{\rho(2)})\left(\big{(}\overline
{J_{E_{\rho(1)}}(x_{\rho(1)})}^{\rho}(y^{\prime}\circ A)\big{)}\left(x_{1},\ldots,\,_{\rho(1)}x,\,_{\rho(2)}x;\bullet\,;\ldots,x_{m}\right)\right)\\
&=\big{(}\big{(}\overline
{J_{E_{\rho(1)}}(x_{\rho(1)})}^{\rho}(y^{\prime}\circ A)\big{)}\left(x_{1},\ldots,\,_{\rho(1)}x,\,_{\rho(2)}x;\bullet\,;\ldots,x_{m})\right)(x_{\rho(2)})\\
&=\overline{J_{E_{\rho(1)}}(x_{\rho(1)})}^{\rho}(y^{\prime}\circ A)\left(x_{1},\ldots,\,_{\rho(1)}x,\ldots,x_{m}\right)\\
&=J_{E_{\rho(1)}}(x_{\rho(1)})\left((y^{\prime}\circ A)(x_{1},\ldots,\,_{\rho(1)}x;\bullet\,;\ldots,x_{m})\right)\\
&=(y^{\prime}\circ A)\left(x_{1},\ldots,\,_{\rho(1)}x;\bullet\,;\ldots,x_{m}\right)(x_{\rho(1)})=(y^{\prime}\circ A)(x_{1},\ldots,x_{m})\\
&=y^{\prime}(A(x_{1},\ldots,x_{m}))=J_{F}(A(x_{1},\ldots,x_{m}))(y^{\prime})=(J_{F}\circ A)(x_{1},\ldots,x_{m})(y^{\prime}).
\end{align*}
%onde a primeira igualdade é por definição de $AR_{m}^{\rho}(A)$, a segunda  é óbvia, a terceira segue da Proposição \ref{operadores positivos}, a quarta decorre da definição do mergulho canônico, e assim sucessivamente. Segue que $AR_{m}^{\rho}(A)\circ (J_{E_{1}},\ldots,J_{E_{m}})=J_{F}\circ A$.\\

We omit the (easy) proof of (c).

%{\rm (c)} %Seja $A\in \mathcal{L}_{r}(E_{1},\ldots,E_{m};F)$ positivo
%Supposee that $A$ is positive and let $0\leq x_{1}^{\prime\prime}\in E_{1}^{\sim\sim},\ldots,0\leq x_{m}^{\prime\prime}\in E_{m}^{\sim\sim}$ be given. On the one hand, using what we did above we know that the operators $\big(\overline{x_{
%\rho(m)}^{\prime\prime}}^{\rho}\circ\cdots\circ\overline
%{x_{\rho(1)}^{\prime\prime}}^{\rho}\big)$ are positive. On the other hand,  $(y^{\prime}\circ A)$ is positive for every $0\leq y^{\prime}\in F^{\sim}$, so $\big(\overline{x_{
%\rho(m)}^{\prime\prime}}^{\rho}\circ\cdots\circ\overline
%{x_{\rho(1)}^{\prime\prime}}^{\rho}\big)(y^{\prime}\circ A)\geq 0$, that is, $AR_{m}^{\rho}(A)(x_{1}^{\prime\prime},\ldots,x_{m}^{\prime\prime})(y^{\prime})\geq 0$ whenever $y^{\prime}\geq 0$, proving that  $AR_{m}^{\rho}(A)(x_{1}^{\prime\prime},\ldots,x_{m}^{\prime\prime})\geq 0$. \\%, o que nos permite concluir que $AR_{m}^{\sigma}(A)$ é positivo. \\
\noindent{\rm (d)} Given $x_{1}\in E_{1},\ldots,x_{m-1}\in E_{m-1}, y^{\prime}\in F^{\sim}, x_{m}^{\prime\prime}\in E_{m}^{\sim\sim}$, since %todo $x_{1}\in E_{1},\ldots,x_{m-1}\in E_{m-1}$ e todo $y^{\prime}\in F^{\sim}$,
$A^{\ast}(y^{\prime},x_{1},\ldots,x_{m-1})=(y^{\prime}\circ A)(x_{1},\ldots,x_{m-1},\bullet)$, we have %além disso para todo
\begin{align*}
A^{\ast\ast}(x_{m}^{\prime\prime},&y^{\prime},x_{1},\ldots,x_{m-2})(x_{m-1})=x_{m}^{\prime\prime}(A^{\ast}(y^{\prime},x_{1},\ldots,x_{m-1}))
=x_{m}^{\prime\prime}((y^{\prime}\circ A)(x_{1},\ldots,x_{m-1},\bullet))\\
&=\overline{x_{m}^{\prime\prime}}^{\theta}(y^{\prime}\circ A)(x_{1},\ldots,x_{m-1})=\overline{x_{m}^{\prime\prime}}^{\theta}(y^{\prime}\circ A)(x_{1},\ldots,x_{m-2},\bullet)(x_{m-1}),
\end{align*}
which gives that $A^{\ast\ast}(x_{m}^{\prime\prime},y^{\prime},x_{1},\ldots,x_{m-2})=\overline{x_{m}^{\prime\prime}}^{\theta}(y^{\prime}\circ A)(x_{1},\ldots,x_{m-2},\bullet)$. And given $x_{1}\in E_{1},\ldots,x_{m-2}\in E_{m-2}, y^{\prime}\in F^{\sim},x_{m-1}^{\prime\prime}\in E_{m-1}^{\sim\sim}, x_{m}^{\prime\prime}\in E_{m}^{\sim\sim}$,
\begin{align*}
A^{\ast\ast\ast}(x_{m-1}^{\prime\prime},& x_{m}^{\prime\prime},y^{\prime},x_{1},
\ldots,x_{m-3})(x_{m-2})=x_{m-1}^{\prime\prime}(A^{\ast\ast}(x_{m}^{\prime\prime},
y^{\prime},x_{1},\ldots,x_{m-2}))\\
&=x_{m-1}^{\prime\prime}(\overline{x_{m}^{\prime\prime}}^{\theta}(y^{\prime}\circ A)(x_{1},\ldots,x_{m-2},\bullet))=\overline{x_{m-1}^{\prime\prime}}^{\theta}(\overline{x_{m}^{\prime\prime}}^{\theta}(y^{\prime}\circ A))(x_{1},\ldots,x_{m-2})\\
&=\overline{x_{m-1}^{\prime\prime}}^{\theta}(\overline{x_{m}^{\prime\prime}}^{\theta}(y^{\prime}\circ A))(x_{1},\ldots,x_{m-3},\bullet)(x_{m-2})\\
&=\big(\overline{x_{m-1}^{\prime\prime}}^{\theta}\circ\overline{x_{m}^{\prime\prime}}^{\theta}\big)(y^{\prime}\circ A)(x_{1},\ldots,x_{m-3},\bullet)(x_{m-2}),
\end{align*}
proving that  $A^{\ast\ast\ast}(x_{m-1}^{\prime\prime},x_{m}^{\prime\prime},y^{\prime},x_{1},\ldots,x_{m-3})=\big(\overline{x_{m-1}^{\prime\prime}}^{\theta}\circ\overline{x_{m}^{\prime\prime}}^{\theta}\big)(y^{\prime}\circ A)(x_{1},\ldots,x_{m-3},\bullet)$. Repeating the procedure $m-2$ times we get that, for all $x_{1}^{\prime\prime}\in E_{1}^{\sim\sim},\ldots,x_{m}^{\prime\prime}\in E_{m}^{\sim\sim}, y^{\prime}\in F^{\sim}$,
$$A^{\ast(m+1)}(x_{1}^{\prime\prime},\ldots,x_{m}^{\prime\prime})(y^{\prime})=\big(\overline{x_{1}^{\prime\prime}}^{\theta}\circ\cdots\circ \overline{x_{m}^{\prime\prime}}^{\theta}\circ\big)(y^{\prime}\circ A)=AR_{m}^{\theta}(A)(x_{1}^{\prime\prime},\ldots,x_{m}^{\prime\prime})(y^{\prime}).$$
%logo $A^{\ast(m+1)}=AR_{m}^{\theta}(A)$ como queríamos.
\end{proof}

\begin{remark}\label{remmer}\rm In Theorem \ref{propriedadesherdadas}, if $u \in {\cal L}_r(F;G)$, then $AR_{m}^{\rho}(u \circ A) = u'' \circ AR_{m}^{\rho}( A)$. Indeed, for $x_{1}^{\prime\prime}\in E_{1}^{\sim\sim},\ldots,x_{m}^{\prime\prime}\in E_{m}^{\sim\sim}$ and $z' \in G^\sim$,
\begin{align*}AR_{m}^{\rho}(u \circ A)(x_{1}^{\prime\prime},\ldots,x_{m}^{\prime\prime})(z^{\prime})& = \big(\overline{x_{
\rho(m)}^{\prime\prime}}^{\rho}\circ\cdots\circ\overline
{x_{\rho(1)}^{\prime\prime}}^{\rho}\big)(z'\circ u \circ A)\\
&= AR_{m}^{\rho}(A)(x_{1}^{\prime\prime},\ldots,x_{m}^{\prime\prime})(z^{\prime}\circ u)\\
& = AR_{m}^{\rho}(A)(x_{1}^{\prime\prime},\ldots,x_{m}^{\prime\prime})(u'(z^{\prime}))\\
& = (u''\circ AR_{m}^{\rho}(A))(x_{1}^{\prime\prime},\ldots,x_{m}^{\prime\prime})(z^{\prime}).
\end{align*}
\end{remark}

Our next aim is to show that a description of the Arens extensions on Riesz spaces similar to the Davie and Gamelin \cite{gamelin} description of the Aron-Berner extensions  on Banach spaces holds true replacing the weak-star topology with the weak absolute topology $|\sigma|$. This description shall be useful later. For the topology $|\sigma|$ on the order dual of a Riesz space, see  \cite[Chapter 6]{ali}.

\begin{lemma}\label{sigmacontinuo}
Let $E_{1},\ldots,E_{m}, F$ be Riesz spaces,  $A\in\mathcal{L}_{r}(E_{1},\ldots,E_{m};F)$, $\rho\in S_{m}$,  $k\in\{1,\ldots,m\}$, $ x_{\rho(1)}^{\prime\prime}\in E_{\rho(1)}^{\sim\sim},\ldots, x_{\rho(k-1)}^{\prime\prime}\in E_{\rho(k-1)}^{\sim\sim}, x_{\rho(k+1)}\in E_{\rho(k+1)},\ldots,x_{\rho(m)}\in E_{\rho(m)}$. Define
$z_{1}^{\prime\prime},\ldots,z_{m}^{\prime\prime}$ by
\begin{equation}\label{l3}
z_{\rho(j)}^{\prime\prime}=\left\{
\begin{array}{lcr}
x_{\rho(j)}^{\prime\prime} &\text{ if} &j=1,\ldots,k,\\
J_{E_{\rho(j)}}(x_{\rho(j)})& \text{ if} &j=k+1,\ldots,n.
\end{array}
\right.
\end{equation} Then the regular linear operator $$AR_{m}^{\rho}(A)(x_{1}^{\prime\prime},\ldots,\bullet\,,\ldots,x_{m}^{\prime\prime})\colon E_{\rho(k)}^{\sim\sim}\longrightarrow F^{\sim\sim},$$
$$AR_{m}^{\rho}(A)(x_{1}^{\prime\prime},\ldots,\bullet\,,\ldots,x_{m}^{\prime\prime})(x_{\rho(k)}^{\prime\prime})
=AR_{m}(A)(z_{1}^{\prime\prime},\ldots,z_{m}^{\prime\prime}),$$ %para todo  $x_{\rho(k)}^{\prime\prime}\in E_{\rho(k)}^{\sim\sim}$ são
where the dot $\bullet$ is placed at the $\rho(k)$-th coordinate, is
$|\sigma|$-$|\sigma|$ continuous.
\end{lemma}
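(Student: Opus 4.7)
I will realize $\Phi:=AR_m^\rho(A)(x_1^{\prime\prime},\ldots,\bullet,\ldots,x_m^{\prime\prime})$ as the transpose $\Phi(x^{\prime\prime})=x^{\prime\prime}\circ T$ of an explicit regular linear operator $T\colon F^{\sim}\to E_{\rho(k)}^{\sim}$, and then derive the $|\sigma|$-$|\sigma|$ continuity from the existence of the modulus $|T|$.

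\emph{Construction of $T$.} Define $T\colon F^{\sim}\to E_{\rho(k)}^{\sim}$ as the composition, applied in order, of: (i) $y^{\prime}\mapsto y^{\prime}\circ A$ from $F^{\sim}$ into $\mathcal{L}_r(E_1,\ldots,E_m)$; (ii) the operators $\overline{x_{\rho(j)}^{\prime\prime}}^{\rho}$ from Proposition~\ref{operadores positivos}, for $j=1,\ldots,k-1$; (iii) the evaluation maps $C\mapsto C(\ldots;x_{\rho(j)};\ldots)$ in the $\rho(j)$-th slot, for $j=k+1,\ldots,m$. Each factor is regular: (i) because writing $A=A_1-A_2$ with $A_i\geq 0$ turns $y^{\prime}\mapsto y^{\prime}\circ A$ into a difference of positive maps; (ii) by Proposition~\ref{operadores positivos}; (iii) because evaluation at $x_{\rho(j)}$ coincides with $\overline{J_{E_{\rho(j)}}(x_{\rho(j)})}^{\rho}$ and is regular via the splitting $x_{\rho(j)}=x_{\rho(j)}^{+}-x_{\rho(j)}^{-}$. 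Hence $T$ is regular.

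\emph{Identification $\Phi(x^{\prime\prime})=x^{\prime\prime}\circ T$.} Substitute $z_{\rho(j)}^{\prime\prime}=J_{E_{\rho(j)}}(x_{\rho(j)})$ for $j>k$ and $z_{\rho(k)}^{\prime\prime}=x^{\prime\prime}$ into the defining composition $\Phi(x^{\prime\prime})(y^{\prime})=(\overline{z_{\rho(m)}^{\prime\prime}}^{\rho}\circ\cdots\circ\overline{z_{\rho(1)}^{\prime\prime}}^{\rho})(y^{\prime}\circ A)$. Since $\overline{J_{E_{\rho(j)}}(x_{\rho(j)})}^{\rho}(C)$ is just $C$ evaluated at $x_{\rho(j)}$ in the $\rho(j)$-th coordinate, the evaluation factors (for $j>k$) act on coordinates disjoint from $\rho(k)$ and therefore commute with the single factor $\overline{x^{\prime\prime}}^{\rho}$; pulling them to the right and unwinding the definition of $\overline{x^{\prime\prime}}^{\rho}$ produces $\Phi(x^{\prime\prime})(y^{\prime})=x^{\prime\prime}(T(y^{\prime}))$. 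This is the main bookkeeping step of the argument.

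\emph{Continuity.} Since $E_{\rho(k)}^{\sim}$ is Dedekind complete, $|T|$ exists and $|T(z^{\prime})|\leq|T|(|z^{\prime}|)$ for every $z^{\prime}\in F^{\sim}$. Using the standard formula $|\phi|(u)=\sup\{|\phi(v)|\colon|v|\leq u\}$ for $u\geq 0$, for $x^{\prime\prime}\in E_{\rho(k)}^{\sim\sim}$ and $y^{\prime}\in F^{\sim}$ I obtain
\[
|\Phi(x^{\prime\prime})|(|y^{\prime}|)=\sup_{|z^{\prime}|\leq|y^{\prime}|}|x^{\prime\prime}(T(z^{\prime}))|\leq\sup_{|z^{\prime}|\leq|y^{\prime}|}|x^{\prime\prime}|(|T|(|z^{\prime}|))\leq|x^{\prime\prime}|(|T|(|y^{\prime}|)).
\]
Setting $x^{\prime}:=|T|(|y^{\prime}|)\in(E_{\rho(k)}^{\sim})^{+}$, the right-hand side is precisely the defining seminorm $x^{\prime\prime}\mapsto|x^{\prime\prime}|(|x^{\prime}|)$ of $|\sigma|(E_{\rho(k)}^{\sim\sim},E_{\rho(k)}^{\sim})$. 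Hence every defining seminorm of $|\sigma|(F^{\sim\sim},F^{\sim})$ pulled back through $\Phi$ is dominated by one of $|\sigma|(E_{\rho(k)}^{\sim\sim},E_{\rho(k)}^{\sim})$, which, combined with the linearity of $\Phi$, yields the desired $|\sigma|$-$|\sigma|$ continuity.
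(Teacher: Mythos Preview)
Your argument is correct and follows essentially the same route as the paper: both proofs rest on the identity $\Phi(x^{\prime\prime})(y^{\prime})=x^{\prime\prime}(T(y^{\prime}))$ for a regular map $T\colon F^{\sim}\to E_{\rho(k)}^{\sim}$, and both derive $|\sigma|$-$|\sigma|$ continuity from a modulus-type estimate. The paper establishes the identity by an explicit step-by-step unwinding (showing that each factor $\overline{J_{E_{\rho(j)}}(x_{\rho(j)})}^{\rho}$ for $j>k$ collapses to plain evaluation), then bounds $|\Phi(x_{\alpha}^{\prime\prime}-x^{\prime\prime})|(|\varphi|)$ by $|x_{\alpha}^{\prime\prime}-x^{\prime\prime}|(\psi_{\rho(k)})$ with a concrete $\psi_{\rho(k)}$ built from $|A|$ and the various moduli; your version packages the same estimate via the abstract modulus $|T|$ and seminorm domination, which is slightly cleaner.

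One remark: your ``commutation'' sentence is the crux of the identification and is a bit compressed. Strictly speaking the operators $\overline{\,\cdot\,}^{\rho}$ have rigidly prescribed domains (depending on which coordinates have already been removed in the order dictated by $\rho$), so one cannot literally swap the factors for $j>k$ with the factor for $j=k$. What makes your claim true is that when $z_{\rho(j)}^{\prime\prime}=J_{E_{\rho(j)}}(x_{\rho(j)})$ the action of $\overline{z_{\rho(j)}^{\prime\prime}}^{\rho}$ is pure slot-evaluation, and slot-evaluations in distinct coordinates genuinely commute regardless of how one chooses to type the intermediate spaces. Making this explicit (as the paper does) costs a few lines but removes any ambiguity.
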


\begin{proof} % Seja $A\in\mathcal{L}_{r}(E_{1},\ldots,E_{m};F)$. Dado $\rho\in S_{m}$, $k\in\{1,\ldots,m\}$ e sejam $ x_{\rho(1)}^{\prime\prime}\in E_{\rho(1)}^{\sim\sim},\ldots, x_{\rho(k-1)}^{\prime\prime}\in E_{\rho(k-1)}^{\sim\sim}, x_{\rho(k+1)}\in E_{\rho(k+1)},\ldots,x_{\rho(m)}\in E_{\rho(m)}$. Como as coordenadas em $(z_{1}^{\prime\prime},\ldots,z_{m}^{\prime\prime})$ são
%\begin{equation*}
%z_{\rho(j)}^{\prime\prime}=\left\{
%\begin{array}{lcr}
%x_{\rho(j)}^{\prime\prime} &\text{ se} &j=1,\ldots,k\\
%J_{E_{\rho(j)}}(x_{\rho(j)})& \text{ se} &j=k+1,\ldots,m,
%\end{array}
%\right.
%\end{equation*}
For $y^{\prime}\in  F^{\sim}$ and  $x_{\rho(k)}^{\prime\prime}\in E_{\rho(k)}^{\sim\sim}$, put $B=y^{\prime}\circ A\in\mathcal{L}_{r}(E_{1},\ldots,E_{m})$ and $w_{\rho(j)}=J_{E_{\rho(j)}}(x_{\rho(j)})$, $j=k+1,\ldots,m$. By the definition of the coordinates of $z_{1}^{\prime\prime},\ldots,z_{m}^{\prime\prime}$,
\begin{align*}
AR_{m}^{\rho}&(A)(x_{1}^{\prime\prime},\ldots,\bullet,\ldots,x_{m}^{\prime\prime})(x_{\rho(k)}^{\prime\prime})(y^{\prime})=AR_{m}(A)(z_{1}^{\prime\prime},\ldots,z_{m}^{\prime\prime})(y^{\prime})\\
&=\big(\overline{w_{\rho(m)}}^{\sigma}\circ \cdots \circ\overline{w_{\rho(k+1)}}^{\sigma}\circ \overline{x_{\rho(k)}^{\prime\prime}}^{\rho}\circ\cdots\circ\overline{x_{\rho(1)}^{\prime\prime}}^{\rho}\big)(B)\\
&=\overline{w_{\rho(m)}}^{\rho}\big(\big(\overline{w_{\rho(m-1)}}^{\sigma}\circ \cdots \circ\overline{w_{\rho(k+1)}}^{\rho}\circ \overline{x_{\rho(k)}^{\prime\prime}}^{\rho}\circ\cdots\circ\overline{x_{\rho(1)}^{\prime\prime}}^{\rho} \big)(B)\big)\\
&=w_{\rho(m)}\big(\big(\overline{w_{\rho(m-1)}}^{\rho}\circ \cdots \overline{w_{\rho(k+1)}}^{\rho}\circ  \overline{x_{\rho(k)}^{\prime\prime}}^{\rho}\circ\cdots\circ\overline{x_{\rho(1)}^{\prime\prime}}^{\rho}\big)(B)\big)\\
&=\big(\overline{w_{\rho(m-1)}}^{\rho}\circ \cdots\circ \overline{w_{\rho(k+1)}}^{\rho}\circ  \overline{x_{\rho(k)}^{\prime\prime}}^{\rho}\circ\cdots\circ\overline{x_{\rho(1)}^{\prime\prime}}^{\sigma}\big)(B)(x_{\rho(m)})\\
&\, \,\,\vdots\\
&=\big( \overline{w_{\rho(k+2)}}^{\rho}\circ\overline{w_{\rho(k+1)}}^{\rho}\circ \overline{x_{\rho(k)}^{\prime\prime}}^{\rho}\circ\cdots\circ\overline{x_{\rho(1)}^{\prime\prime}}^{\rho}\big)(B)(x_{1},\ldots,\,_{\rho(1)}x,\ldots,\,_{\rho(k+2)}x,\ldots,x_{m})\\
&=\overline{w_{\rho(k+2)}}^{\rho}\big(\big(\overline{w_{\rho(k+1)}}^{\rho}\circ \overline{x_{\rho(k)}^{\prime\prime}}^{\rho}\circ\cdots\circ\overline{x_{\rho(1)}^{\prime\prime}}^{\rho} \big)(B)\big)(x_{1},\ldots,\,_{\rho(1)}x,\ldots,\,_{\rho(k+2)}x,\ldots,x_{m})\\
&=w_{\rho(k+2)}\big(\big(\big(\overline{w_{\rho(k+1)}}^{\rho}\circ \overline{x_{\rho(k)}^{\prime\prime}}^{\rho}\circ\cdots\circ\overline{x_{\rho(1)}^{\prime\prime}}^{\rho} \big)(B)\big)(x_{1},\ldots,\,_{\rho(1)}x,\ldots,\,_{\rho(k+2)}x,\bullet,\ldots,x_{m})\big)\\
&=\big(\big(\overline{w_{\rho(k+1)}}^{\rho}\circ \overline{x_{\rho(k)}^{\prime\prime}}^{\rho}\circ\cdots\circ\overline{x_{\rho(1)}^{\prime\prime}}^{\rho} \big)(B)\big)(x_{1},\ldots,\,_{\rho(1)}x,\ldots,\,_{\rho(k+2)}x,\bullet,\ldots,x_{m})(x_{\rho(k+2)})\\
&=\big(\overline{w_{\rho(k+1)}}^{\rho}\circ \overline{x_{\rho(k)}^{\prime\prime}}^{\rho}\circ\cdots\circ\overline{x_{\rho(1)}^{\prime\prime}}^{\rho} \big)(B)(x_{1},\ldots,\,_{\rho(1)}x,\ldots,\,_{\rho(k+1)}x,\ldots,x_{m})\\
&=\overline{w_{\rho(k+1)}}^{\rho}\big(\big( \overline{x_{\rho(k)}^{\prime\prime}}^{\rho}\circ\cdots\circ\overline{x_{\rho(1)}^{\prime\prime}}^{\rho} \big)(B)\big)(x_{1},\ldots,\,_{\rho(1)}x,\ldots,\,_{\rho(k+1)}x,\ldots,x_{m})\\
&=w_{\rho(k+1)}\big(\big(\big( \overline{x_{\rho(k)}^{\prime\prime}}^{\rho}\circ\cdots\circ\overline{x_{\rho(1)}^{\prime\prime}}^{\rho} \big)(B)\big)(x_{1},\ldots,\,_{\rho(1)}x,\ldots,\,_{\rho(k+1)}x,\bullet,\ldots,x_{m})\big)\\
&=\big(\overline{x_{\rho(k)}^{\prime\prime}}^{\rho}\circ\cdots\circ\overline{x_{\rho(1)}^{\prime\prime}}^{\rho} \big)(B)(x_{1},\ldots,\,_{\rho(1)}x,\ldots,\,_{\rho(k+1)}x,\bullet,\ldots,x_{m})(x_{\rho(k+1)})\\
&=\big( \overline{x_{\rho(k)}^{\prime\prime}}^{\rho}\circ\cdots\circ\overline{x_{\rho(1)}^{\prime\prime}}^{\rho} \big)(B)(x_{1},\ldots,\,_{\rho(1)}x,\ldots,\,_{\rho(k)}x,\ldots,x_{m})\\
&= \overline{x_{\rho(k)}^{\prime\prime}}^{\rho}\big(\big( \overline{x_{\rho(k-1)}^{\prime\prime}}^{\rho}\circ\cdots\circ\overline{x_{\rho(1)}^{\prime\prime}}^{\rho} \big)(B)\big)(x_{1},\ldots,\,_{\rho(1)}x,\ldots,\,_{\rho(k)}x,\ldots,x_{m})\\
&=x_{\rho(k)}^{\prime\prime}\big(\underbrace{\big(\big( \overline{x_{\rho(k-1)}^{\prime\prime}}^{\rho}\circ\cdots\circ\overline{x_{\rho(1)}^{\prime\prime}}^{\rho} \big)(B)\big)(x_{1},\ldots,\,_{\rho(1)}x,\ldots,\,_{\rho(k)}x,\bullet,\ldots,x_{m})}_{:=\varphi_{\rho(k)}\in E_{\rho(k)}^{\sim}}\big),%=x_{\rho(k)}^{\prime\prime}(\varphi_{\rho(k)}),
\end{align*}
%\textcolor{red}{a última igualdade pasa o margem.}\\
 that is, $AR_{m}^{\rho}(A)(x_{1}^{\prime\prime},\ldots,\bullet,\ldots,x_{m}^{\prime\prime})(x_{\rho(k)}^{\prime\prime})(y^{\prime})=x_{\rho(k)}^{\prime\prime}(\varphi_{\rho(k)})$ for all $y^{\prime}\in F^{\sim}$ and $x_{\rho(k)}^{\prime\prime}\in E_{\rho(k)}^{\sim\sim}$. Let  $(x_{\alpha_{\rho(k)}}^{\prime\prime})_{\alpha_{\rho(k)}\in \Omega_{\rho(k)}}$ be a net in $E_{\rho(k)}^{\sim\sim}$ such that $x_{\alpha_{\rho(k)}}^{\prime\prime} \xrightarrow{\,\, |\sigma|\,\, } x_{\rho(k)}^{\prime\prime}$. % então
%\begin{equation}\label{l4}
%|x_{\alpha_{\rho(k)}}^{\prime\prime}-x_{\rho(k)}^{\prime\prime}|(|x_{\rho(k)}^{\prime}|)\longrightarrow %0, \text{ para todo } x_{\rho(k)}^{\prime}\in E_{\rho(k)}^{\sim}.
%\end{equation}
 Given $\varphi\in F^{\sim}$, putting $$\psi_{\rho(k)}=\big( \overline{|x_{\rho(k-1)}^{\prime\prime}}|^{\rho}\circ\cdots\circ\overline{|x_{\rho(1)}^{\prime\prime}|}^{\rho} \big)(|\varphi|\circ |A|)(|x_{1}|,\ldots,|_{\rho(1)}x|,\ldots,|_{\rho(k)}x|,\bullet,\ldots,|x_{m}|)\in E_{\rho(k)}^{\sim},$$
the $|\sigma|$-convergence gives
\begin{align*}
|AR_{m}^{\rho}(A)(x_{1}^{\prime\prime},\ldots,\bullet,\ldots,x_{m}^{\prime\prime})&(x_{\alpha_{\rho(k)}}^{\prime\prime})-AR_{m}^{\rho}(A)(x_{1}^{\prime\prime},\ldots,\bullet,\ldots,x_{m}^{\prime\prime})(x_{\rho(k)}^{\prime\prime})|(|\varphi|)\\
&=|AR_{m}^{\rho}(A)(x_{1}^{\prime\prime},\ldots,\bullet,\ldots,x_{m}^{\prime\prime})(x_{\alpha_{\rho(k)}}^{\prime\prime}-x_{\rho(k)}^{\prime\prime})|(|\varphi|)\\
&\leq AR_{m}^{\rho}(|A|)(|x_{1}^{\prime\prime}|,\ldots,\bullet,\ldots,|x_{m}^{\prime\prime}|)(|x_{\alpha_{\rho(k)}}^{\prime\prime}-x_{\rho(k)}^{\prime\prime}|)(|\varphi|)\\
&=|x_{\alpha_{\rho(k)}}^{\prime\prime}-x_{\rho(k)}^{\prime\prime}|(\psi_{\rho(k)})\longrightarrow 0,
\end{align*}
which implies that
$$|AR_{m}^{\rho}(A)(x_{1}^{\prime\prime},\ldots,\bullet,\ldots,x_{m}^{\prime\prime})(x_{\alpha_{\rho(k)}}^{\prime\prime})-AR_{m}^{\rho}(A)(x_{1}^{\prime\prime},\ldots,\bullet,\ldots,x_{m}^{\prime\prime})(x_{\rho(k)}^{\prime\prime})|(|\varphi|)\longrightarrow 0,$$
proving that
$$AR_{m}^{\rho}(A)(x_{1}^{\prime\prime},\ldots,\bullet,\ldots,x_{m}^{\prime\prime})(x_{\alpha_{\rho(k)}}^{\prime\prime})\xrightarrow{\,\, |\sigma|\,\, } AR_{m}^{\rho}(A)(x_{1}^{\prime\prime},\ldots,\bullet,\ldots,x_{m}^{\prime\prime})(x_{\rho(k)}^{\prime\prime}).$$
%Portanto os operadores $AR_{m}^{\rho}(A)(x_{1}^{\prime\prime},\ldots,\bullet,\ldots,x_{m}^{\prime\prime})$ são $|\sigma|-|\sigma|$-contínuos.
\end{proof}

\begin{proposition} \label{gamelin}
Let $E_{1},\ldots,E_{m},F$ be Riesz spaces with $E_1, \ldots, E_m$ Archimedean, $\rho\in S_{m}$ and $A \colon E_{1}\times\cdots\times E_{m} \longrightarrow F$ be an $m$-linear regular operator. For $x_k'' \in (E_k^\sim)_n^\sim$, $k = 1, \ldots, m$,
$$AR_{m}^{\rho}(A)(x_1'', \ldots, x_k'') = |\sigma|\mbox{-}\lim_{\alpha_{\rho(1)}}\cdots  |\sigma|\mbox{-}\lim_{\alpha_{\rho(m)}} J_F(A(x_{\alpha_1}, \ldots, x_{\alpha_m})), $$
where $(x_{\alpha_k})_{\alpha_k}$ is a net in $E_k$ such that $J_{E_k}(x_{\alpha_k}) \stackrel{|\sigma|}{\longrightarrow} x_k''$ in $(E_k^\sim)_n^\sim$, $k = 1, \ldots, m$.
\end{proposition}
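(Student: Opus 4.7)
The plan is to prove the identity by converting the $J$-images in $AR_m^\rho(A)(J_{E_1}(x_{\alpha_1}), \ldots, J_{E_m}(x_{\alpha_m}))$ into the desired bidual targets one slot at a time, each conversion being an application of the $|\sigma|$-$|\sigma|$ continuity from Lemma \ref{sigmacontinuo}. The starting point is Theorem \ref{propriedadesherdadas}(b), which immediately gives $J_F(A(x_{\alpha_1}, \ldots, x_{\alpha_m})) = AR_m^\rho(A)(J_{E_1}(x_{\alpha_1}), \ldots, J_{E_m}(x_{\alpha_m}))$, so the innermost expression in the iterated limit already has the form of the Arens extension evaluated at a tuple of canonical embeddings. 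The order-continuity hypothesis $x_k'' \in (E_k^\sim)_n^\sim$, combined with the Archimedean assumption on $E_k$, is exactly what guarantees that nets $(x_{\alpha_k})_{\alpha_k} \subset E_k$ with $J_{E_k}(x_{\alpha_k}) \xrightarrow{|\sigma|} x_k''$ exist, so the statement of the proposition makes sense.

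The core argument is an induction on $k = 1, 2, \ldots, m$. The inductive hypothesis is that after taking the first $k-1$ iterated limits (in $\alpha_{\rho(1)}, \ldots, \alpha_{\rho(k-1)}$) we have $AR_m^\rho(A)(z_1'', \ldots, z_m'')$, where $z_{\rho(j)}'' = x_{\rho(j)}''$ for $j < k$ and $z_{\rho(j)}'' = J_{E_{\rho(j)}}(x_{\alpha_{\rho(j)}})$ for $j \geq k$. This is precisely the configuration of variables to which Lemma \ref{sigmacontinuo} applies to give $|\sigma|$-$|\sigma|$ continuity of the map at position $\rho(k)$. Taking the $|\sigma|$-limit along the net $(J_{E_{\rho(k)}}(x_{\alpha_{\rho(k)}}))_{\alpha_{\rho(k)}}$, which $|\sigma|$-converges to $x_{\rho(k)}''$, then converts position $\rho(k)$ to $x_{\rho(k)}''$ and advances the induction. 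After $m$ steps we arrive at $AR_m^\rho(A)(x_1'', \ldots, x_m'')$, as claimed.

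The only delicate point is the ordering: Lemma \ref{sigmacontinuo} requires the positions $\rho(k+1), \ldots, \rho(m)$ to still be $J$-images when the limit at position $\rho(k)$ is taken, which forces the conversions (and hence the successive iterated limits) to be performed in the order $\rho(1), \rho(2), \ldots, \rho(m)$, matching the way the limits are written in the statement. Beyond this bookkeeping, the proof is a direct iteration of Lemma \ref{sigmacontinuo} on top of the base case furnished by Theorem \ref{propriedadesherdadas}(b), and I do not foresee any substantial obstacle.
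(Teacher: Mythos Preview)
Your proposal is correct and takes the same approach as the paper: both arguments iterate Lemma \ref{sigmacontinuo} on top of the base case Theorem \ref{propriedadesherdadas}(b), the only cosmetic difference being that the paper unwinds the limits from $k=m$ down to $k=1$ while you build them up from $k=1$ to $k=m$. One remark on the ordering: performing the conversions in the sequence $\rho(1),\rho(2),\ldots,\rho(m)$ makes $\alpha_{\rho(1)}$ the \emph{innermost} limit and $\alpha_{\rho(m)}$ the outermost, so the formula you actually prove is $|\sigma|\text{-}\lim_{\alpha_{\rho(m)}}\cdots|\sigma|\text{-}\lim_{\alpha_{\rho(1)}}J_F(A(x_{\alpha_1},\ldots,x_{\alpha_m}))$; this is precisely what the paper's own proof obtains (and what is invoked in the proof of Proposition \ref{fpro}), so the reversed order printed in the statement is a typo there, not a flaw in your argument.
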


\begin{proof} % Dado $\rho\in S_{m}$ e sejam $A\in\mathcal{L}_{r}(E_{1},\ldots,E_{m};F)$ e
%Given $x_{j}^{\prime\prime}\in (E_{j}^{\sim})_{n}^{\sim}, j=1,\ldots,m$ então por
By \cite[Theorem 2]{grobler} there exist nets $(x_{\alpha_{j}})_{\alpha_{j}\in \Omega_{j}}$ in $E_{j}$ such that $J_{E_{j}}(x_{\alpha_{j}})\xrightarrow{\,\, |\sigma|\,\, } x_{j}^{\prime\prime}$, $j=1,\ldots, k$. Consider $z_{1}^{\prime\prime},\ldots,z_{m}^{\prime\prime}$ as in Lemma \ref{sigmacontinuo}.
%\begin{equation}\label{l3}
%z_{\rho(j)}^{\prime\prime}=\left\{
%\begin{array}{lcr}
%x_{\rho(j)}^{\prime\prime} &\text{ se} &j=1,\ldots,k\\
%J_{E_{\rho(j)}}(x_{\rho(j)})& \text{ se} &j=k+1,\ldots,n,
%\end{array}
%\right.
%\end{equation}
In the next computation, the symbol $\stackrel{k=j}{=}$ means that Lemma \ref{sigmacontinuo} is being applied for $k=j$, and the symbol $\stackrel{\ref{l3}, k=j}{=}$ means that (\ref{l3}) is being used for $k=j$: %então% \textcolor{red}{(Está muito confuso, tem que melhorar essa nota\c c\~ao ou ent\~ao melhorar a explica\c c\~ao)}
\begin{align*}
AR_{m}^{\rho}(A)(x_{1}^{\prime\prime},\ldots,&x_{m}^{\prime\prime})= AR_{m}^{\rho}(A)(x_{1}^{\prime\prime},\ldots,\bullet,\ldots,x_{m}^{\prime\prime})(x_{\sigma(m)}^{\prime\prime})\\
&\hspace{-0.17cm}\stackrel{ k=m}{=} |\sigma|-\lim_{\alpha_{\sigma(m)}}AR_{m}^{\rho}(A)(x_{1}^{\prime\prime},\ldots,\bullet,\ldots,x_{m}^{\prime\prime})(J_{E_{\sigma(m)}}(x_{\alpha_{\sigma(m)}}))\\
&\hspace{-0.48cm}\stackrel{\ref{l3}, k=m-1}{=} |\sigma|-\lim_{\alpha_{\sigma(m)}}AR_{m}^{\rho}(A)(z_{1}^{\prime\prime},\ldots,z_{m}^{\prime\prime})\\
&= |\sigma|-\lim_{\alpha_{\sigma(m)}}AR_{m}^{\rho}(A)(x_{1}^{\prime\prime},\ldots,\bullet,\ldots,x_{m}^{\prime\prime})(x_{\sigma(m-1)}^{\prime\prime})\\
&\hspace{-0.37cm}\stackrel{k=m-1}{=} |\sigma|-\hspace{-0.1cm}\lim_{\alpha_{\sigma(m)}}|\sigma|-\hspace{-0.3cm}\lim_{\alpha_{\sigma(m-1)}}\hspace{-0.2cm} AR_{m}^{\rho}(A)(x_{1}^{\prime\prime},\ldots,\bullet,\ldots,x_{m}^{\prime\prime})(J_{E_{\sigma(m-1)}}(x_{\alpha_{\sigma(m-1)}}))\\
&\hspace{-0.48cm}\stackrel{\ref{l3}, k=m-2}{=} |\sigma|-\lim_{\alpha_{\sigma(m)}}|\sigma|-\lim_{\alpha_{\sigma(m-1)}}AR_{m}^{\rho}(A)(z_{1}^{\prime\prime},\ldots,z_{m}^{\prime\prime})\\
&\hspace{0.2cm}\vdots\\
&\hspace{-0.22cm}\stackrel{\ref{l3}, k=1}{=} |\sigma|-\lim_{\alpha_{\sigma(m)}}\cdots |\sigma|-\lim_{\alpha_{\sigma(2)}}AR_{m}^{\rho}(A)(z_{1}^{\prime\prime},\ldots,z_{m}^{\prime\prime})\\
&= |\sigma|-\lim_{\alpha_{\sigma(m)}}\cdots |\sigma|-\lim_{\alpha_{\sigma(2)}}AR_{m}^{\rho}(A)(x_{1}^{\prime\prime},\ldots,\bullet,\ldots,x_{m}^{\prime\prime})(x_{\sigma(1)}^{\prime\prime})\\
&\hspace{-0.15cm}\stackrel{k=1}{=}|\sigma|-\lim_{\alpha_{\sigma(m)}}\cdots |\sigma|-\lim_{\alpha_{\sigma(1)}}AR_{m}^{\rho}(A)(x_{1}^{\prime\prime},\ldots,\bullet,\ldots,x_{m}^{\prime\prime})(J_{E_{\sigma(1)}}(x_{\alpha_{\sigma(1)}}))\\
&\hspace{-0,033cm}=|\sigma|-\lim_{\alpha_{\sigma(m)}}\cdots |\sigma|-\lim_{\alpha_{\sigma(1)}}AR_{m}^{\rho}(A)(J_{E_{1}}(x_{\alpha_{1}}),\ldots,J_{E_{m}}(x_{\alpha_{m}}))\\
&\hspace{-0.04cm}=|\sigma|-\lim_{\alpha_{\sigma(m)}}\cdots |\sigma|-\lim_{\alpha_{\sigma(1)}}J_{F}(A(x_{\alpha_{1}},\ldots,x_{\alpha_{m}})).
\end{align*}
%Portanto  $AR_{m}^{\rho}(A)(x_{1}^{\prime\prime},\ldots,x_{m}^{\prime\prime})=|\sigma|-\displaystyle\lim_{\alpha_{\sigma(m)}}\cdots |\sigma|-\displaystyle\lim_{\alpha_{\sigma(1)}}J_{F}(A(x_{\alpha_{1}},\ldots,x_{\alpha_{m}})).$
\end{proof}
\section{Extensions of Riesz multimorphisms}\label{section 3}

Given a Riesz multimorphism $A \in {\cal L}_r(E_1, \ldots, E_m;F)$, from Theorem \ref{propriedadesherdadas}(b) it follows immediately that the restriction of any Arens extension $AR_m^\rho(A)$ of $A$ to $J_{E_1}(E_1) \times \cdots \times J_{E_m}(E_m)$ is a Riesz multimorphism. Moreover, in \cite{boulabiar1} it was proved in the bilinear case that the restriction of the Arens extension $AR_2^\theta(A)$ to $(E_1^\sim)_n^\sim \times  (E_2^\sim)_n^\sim$ is a Riesz multimorphism. Next we show that the formula obtained in Proposition \ref{gamelin} gives a short proof of an  extension of this fact to any Arens extension of $A$ and to any $m$. %The (short) proof is included because we use a technique different from the one used in \cite{boulabiar1}.

\begin{proposition}\label{fpro} If the Riesz spaces $E_1, \ldots, E_m$ are Archimedean, $F$ is a Riesz space and $A \colon E_{1}\times\cdots\times E_{m} \longrightarrow F$ is a Riesz multimorphism, then, for every $\rho \in S_{m}$, the restriction of $AR^\rho_m(A)$ to $(E_1^\sim)_n^\sim \times \cdots \times (E_m^\sim)_n^\sim$ is a Riesz multimorphism. %\textcolor{red}{Falta dizer que $F$ é espaço de Riesz}
\end{proposition}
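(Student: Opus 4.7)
The plan is to exploit the representation of $AR_m^\rho(A)$ as an iterated $|\sigma|$-limit given by Proposition \ref{gamelin} and to push the absolute value through all the ingredients of that formula.

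First I would fix $x_k'' \in (E_k^\sim)_n^\sim$ for $k=1,\dots,m$ and, invoking \cite[Theorem 2]{grobler}, choose nets $(x_{\alpha_k})$ in $E_k$ with $J_{E_k}(x_{\alpha_k}) \stackrel{|\sigma|}{\to} x_k''$. Because $(E_k^\sim)_n^\sim$ is an ideal of $E_k^{\sim\sim}$, the element $|x_k''|$ also lies in $(E_k^\sim)_n^\sim$. The key technical fact I would use is that the absolute value is $|\sigma|$-continuous: for any $\varphi \in E_k^\sim$, the inequality $|\varphi|(||y|-|z||) \leq |\varphi|(|y-z|)$ shows that $|\sigma|$-convergence is preserved under $|\cdot|$. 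Since $J_{E_k}$ is a Riesz homomorphism, this yields $J_{E_k}(|x_{\alpha_k}|) = |J_{E_k}(x_{\alpha_k})| \stackrel{|\sigma|}{\to} |x_k''|$.

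Next I would combine the hypothesis that $A$ is a Riesz multimorphism with the fact that $J_F$ is a Riesz homomorphism to obtain, for every index,
\[
J_F\bigl(A(|x_{\alpha_1}|,\dots,|x_{\alpha_m}|)\bigr)
= J_F\bigl(|A(x_{\alpha_1},\dots,x_{\alpha_m})|\bigr)
= \bigl|J_F(A(x_{\alpha_1},\dots,x_{\alpha_m}))\bigr|.
\]
Applying Proposition \ref{gamelin} with the nets $(|x_{\alpha_k}|)$ on the one hand, and with the nets $(x_{\alpha_k})$ on the other, and using once more the $|\sigma|$-continuity of $|\cdot|$ to pull the absolute value through each of the $m$ successive limits, I would conclude
\[
AR_m^\rho(A)(|x_1''|,\dots,|x_m''|)
= |\sigma|\text{-}\lim_{\alpha_{\rho(1)}}\!\cdots |\sigma|\text{-}\lim_{\alpha_{\rho(m)}} \bigl|J_F(A(x_{\alpha_1},\dots,x_{\alpha_m}))\bigr|
= \bigl|AR_m^\rho(A)(x_1'',\dots,x_m'')\bigr|,
\]
which is exactly the multimorphism identity on $(E_1^\sim)_n^\sim \times \cdots \times (E_m^\sim)_n^\sim$.

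The main obstacle I anticipate is the rigorous interchange between the absolute value and the $m$ nested $|\sigma|$-limits in the last display; this rests entirely on the $|\sigma|$-continuity of $|\cdot|$ noted above, applied inductively from the innermost limit outward. Everything else is a clean combination of the Riesz homomorphism property of $J_F$ and $J_{E_k}$ with the Riesz multimorphism property of $A$.
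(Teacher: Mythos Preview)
Your proposal is correct and follows essentially the same approach as the paper's own proof: both invoke Proposition~\ref{gamelin} twice (once for the nets $(x_{\alpha_k})$ and once for the nets $(|x_{\alpha_k}|)$), use that $A$ and $J_F$ respect absolute values, and then pass $|\cdot|$ through the iterated $|\sigma|$-limits via the $|\sigma|$-continuity of the lattice operations. Your write-up is in fact slightly more explicit than the paper's, since you justify the $|\sigma|$-continuity of $|\cdot|$ via $||y|-|z||\le|y-z|$ and note that $(E_k^\sim)_n^\sim$ is an ideal so that $|x_k''|$ stays in the right space.
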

%\textcolor{red}{A primeira linha embaixo pasa o margem}
\begin{proof}
%Sejam $\rho\in S_{m}$, $A\in\mathcal{L}_{r}(E_{1},\ldots,E_{m};F)$ um multimorfismo de Riesz
 Given functionals $x_{1}^{\prime\prime}\in (E_{1}^{\sim})_{n}^{\sim}, \ldots,x_{m}^{\prime\prime}\in (E_{m}^{\sim})_{n}^{\sim}$, let %  então pelo Teorema \ref{sigmacontinuo}
%$$AR_{m}^{\rho}(A)(x_{1}^{\prime\prime},\ldots,x_{m}^{\prime\prime})=|\sigma|-\lim_{\alpha_{\sigma(m)}}\cdots |\sigma|-\lim_{\alpha_{\sigma(1)}}J_{F}(A(x_{\alpha_{1}},\ldots,x_{\alpha_{m}})),$$
 %onde para cada $j=1,\ldots,m$,
 $(x_{\alpha_{j}})_{\alpha_{j}\in \Omega_{j}}$ be nets in $E_{j}$ such that  $J_{E_{j}}(x_{\alpha_{j}})\xrightarrow{\,\, |\sigma| \,\, } x_{j}^{\prime\prime}$, $j = 1, \ldots, m$. The $|\sigma|$-$|\sigma|$ continuity of the lattice operators yields $J_{E_{j}}(|x_{\alpha_{j}}|)\xrightarrow{\,\, |\sigma| \,\, } |x_{j}^{\prime\prime}|$, $j=1,\ldots,m$. Since $A$ is a Riesz multimorphism and $J_F$ is a Riesz homomorphism, applying the $|\sigma|$-$|\sigma|$ continuity of the lattice operators once again and calling on Proposition \ref{gamelin},
%Denovo pelo Teorema \ref{sigmacontinuo},
%$$AR_{m}^{\rho}(A)(|x_{1}^{\prime\prime}|,\ldots,|x_{m}^{\prime\prime}|)=|\sigma|-\lim_{\alpha_{\sigma(m)}}\cdots |\sigma|-\lim_{\alpha_{\sigma(1)}}J_{F}(A(|x_{\alpha_{1}}|,\ldots,|x_{\alpha_{m}}|)),$$
% como $A$ é multimorfismo de Riesz e $J_{F}$ é homomorfismo de Riesz,
\begin{align*}
AR_{m}^{\rho}(A)(|x_{1}^{\prime\prime}|,\ldots,|x_{m}^{\prime\prime}|)&=|\sigma|-\lim_{\alpha_{\rho(m)}}\cdots |\sigma|-\lim_{\alpha_{\rho(1)}}J_{F}(A(|x_{\alpha_{1}}|,\ldots,|x_{\alpha_{m}}|))\\
&=|\sigma|-\lim_{\alpha_{\rho(m)}}\cdots |\sigma|-\lim_{\alpha_{\rho(1)}}|J_{F}(A(x_{\alpha_{1}},\ldots,x_{\alpha_{m}}))|\\
&=\big\|\sigma|-\lim_{\alpha_{\rho(m)}}\cdots |\sigma|-\lim_{\alpha_{\rho(1)}}J_{F}(A(x_{\alpha_{1}},\ldots,x_{\alpha_{m}}))\big|\\
&=|AR_{m}^{\rho}(A)(x_{1}^{\prime\prime},\ldots,x_{m}^{\prime\prime})|.
\end{align*}
%Onde a penúltima igualdade segue, pois as operações reticuladas são $|\sigma|-|\sigma|$-contínuo. Assim,
%$$AR_{m}^{\rho}(A)(|x_{1}^{\prime\prime}|,\ldots,|x_{m}^{\prime\prime}|)=|AR_{m}^{\rho}(A)(x_{1}^{\prime\prime},\ldots,x_{m}^{\prime\prime})|,$$
%isto é, todas as extensões de Arens $AR_{m}^{\rho}(A)$ são multimorfismos de Riesz sobre $(E_{1}^{\sim})_{n}^{\sim}\times\cdots\times (E_{m}^{\sim})_{n}^{\sim}$.
\end{proof}

Now we proceed to our main results. Similarly to the notion of finite rank maps between linear spaces (see \cite{mujica}), we say that a map taking values in a Riesz space has {\it finite rank} if the sublattice generated by its range is finite dimensional.

%The space of regular multilinear forms on $E_1 \times \cdots \times E_m$ is denoted by ${\cal L}_r(E_1, \ldots, E_m)$, that is, ${\cal L}_r(E_1, \ldots, E_m) = {\cal L}_r(E_1, \ldots, E_m; \mathbb{R})$.

\begin{theorem}\label{AronBernerhr} If $E_1, \ldots, E_{m}$ are Riesz spaces, $F$ is an  Archimedean Riesz space and $A \colon E_1 \times \cdots \times E_{m} \longrightarrow F$ is a finite rank Riesz multimorphism, then all Arens extensions of $A$, $AR_m^\rho(A) \colon E_1^{\sim\sim}\times \cdots \times E_m^{\sim\sim} \longrightarrow F^{\sim\sim}$, $\rho \in S_m$, coincide and are Riesz multimorphisms.
\end{theorem}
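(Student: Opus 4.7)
The plan is to use the finite rank hypothesis to reduce the statement to scalar-valued Riesz multimorphisms via a disjoint decomposition in $F$. First, let $G \subseteq F$ be the finite-dimensional sublattice generated by the range of $A$. Since $F$ is Archimedean, so is $G$, and therefore $G$ is Riesz isomorphic to $\mathbb{R}^n$ for some $n$; fix the corresponding pairwise disjoint positive generators $e_1,\ldots,e_n \in G$ and the $i$-th coordinate projections $\pi_i \colon G \to \mathbb{R}$. Each $A_i := \pi_i \circ A \colon E_1 \times \cdots \times E_m \to \mathbb{R}$ is then a scalar-valued Riesz multimorphism (as the composition of a Riesz multimorphism with a Riesz homomorphism), and $A = \sum_{i=1}^n A_i(\cdot)\, e_i$.

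Using the obvious linearity of $A \mapsto AR_m^\rho(A)$ in $A$, together with Remark \ref{remmer} applied to the Riesz homomorphism $\mathbb{R} \to F$, $t \mapsto t e_i$, I would then obtain
$$AR_m^\rho(A)(x_1'',\ldots,x_m'') \;=\; \sum_{i=1}^n AR_m^\rho(A_i)(x_1'',\ldots,x_m'')\, J_F(e_i).$$
Since $J_F$ is a Riesz homomorphism, the elements $J_F(e_i)$ remain pairwise disjoint in $F^{\sim\sim}$. Hence, once each $AR_m^\rho(A_i)$ is shown to be independent of $\rho$ and a scalar Riesz multimorphism, the modulus of $AR_m^\rho(A)$ splits across the disjoint components $J_F(e_i)$, yielding simultaneously the $\rho$-independence of $AR_m^\rho(A)$ and the identity $|AR_m^\rho(A)(x_1'',\ldots,x_m'')| = AR_m^\rho(A)(|x_1''|,\ldots,|x_m''|)$.

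The remaining scalar case I would handle via the standard structure theorem for scalar Riesz multimorphisms: any nonzero such $C \colon E_1 \times \cdots \times E_m \to \mathbb{R}$ factors as $C = \varphi_1 \otimes \cdots \otimes \varphi_m$ for some Riesz homomorphisms $\varphi_j \colon E_j \to \mathbb{R}$. Plugging this into the definition of $AR_m^\rho$ and iteratively peeling off one factor at a time---at the $k$-th step the functional $C(x_1,\ldots,\,_{\rho(1)}x,\ldots,\,_{\rho(k)}x;\bullet\,;\ldots,x_m)$ is a scalar multiple of $\varphi_{\rho(k)}$, so applying $\overline{x''_{\rho(k)}}^\rho$ merely extracts the number $x''_{\rho(k)}(\varphi_{\rho(k)})$---yields
$$AR_m^\rho(C)(x_1'',\ldots,x_m'') \;=\; \prod_{j=1}^m x_j''(\varphi_j),$$
which is manifestly independent of $\rho$. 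Since the second adjoint of each $\varphi_j$ is again a Riesz homomorphism $E_j^{\sim\sim} \to \mathbb{R}$, we get $|\prod_j x_j''(\varphi_j)| = \prod_j |x_j''|(\varphi_j) = AR_m^\rho(C)(|x_1''|,\ldots,|x_m''|)$, completing the scalar case.

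The main obstacle I foresee is the scalar structure theorem, particularly since the hypotheses allow the $E_j$ to be non-Archimedean. In the Archimedean setting the tensor decomposition of a scalar Riesz multimorphism is classical; in general, any scalar Riesz homomorphism on a Riesz space must vanish on the infinitesimals, so after passing to the Archimedean quotient of each $E_j$ the argument proceeds unchanged. A secondary bookkeeping point---making the iterative ``peeling off'' in the notation of Section \ref{section2} fully rigorous---is a direct but notation-heavy induction on $k$.
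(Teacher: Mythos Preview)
Your proposal is correct and follows essentially the same approach as the paper: both reduce to the scalar case via the finite-dimensional Archimedean sublattice $G\cong\mathbb{R}^n$, and both handle the scalar case by the factorization $C=\varphi_1\otimes\cdots\otimes\varphi_m$ (the paper cites \cite{kus} for this) followed by the iterative computation yielding $AR_m^\rho(C)(x_1'',\ldots,x_m'')=\prod_j x_j''(\varphi_j)$. The only cosmetic difference is that the paper passes through an intermediate $\mathbb{R}^n$-valued step using the isomorphism $I\colon G\to\mathbb{R}^n$ and the compositions $i''\circ(I^{-1})''$, whereas you work directly with the disjoint basis $e_1,\ldots,e_n$ inside $F$ and split the modulus using disjointness of the $J_F(e_i)$; these are equivalent packagings of the same argument.
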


\begin{proof} Let $\rho \in S_{m}$. We start with a scalar-valued Riesz multimorphism $B \colon E_1 \times \cdots \times E_{m} \longrightarrow \mathbb{R}$. By Theorem \ref{propriedadesherdadas} we know that $ AR_{m}^{\rho}(B)\colon E_{1}^{\sim\sim}\times
\cdots\times E_{m}^{\sim\sim}\longrightarrow \mathbb{R}$ is a regular $m$-linear form. %Seja  $A\in\mathcal{L}_{r}(E_{1},\ldots,E_{m})$ um multimorfismo de Riesz, então de
By \cite[Theorem 6]{kus} there are Riesz homomorphisms $\varphi_{i}\colon E_{i}\longrightarrow \mathbb{R}$, $i=1,\ldots,m$, such that $B(x_{1},\ldots,x_{m})=\varphi_{1}(x_{1})\cdots\varphi_{m}(x_{m})$ for all  $x_{1}\in E_{1},\ldots,x_{m}\in E_{m}$. For  $k=1,\ldots,m$, it is plain that
$$B_{k}\colon E_{1}\times\cdots\times \,_{\rho(1)}E\times\cdots\times \,_{\rho(k-1)}E\times\cdots\times E_{m}\longrightarrow\mathbb{R}$$
%\textcolor{red}{embaixo pasa o margem}
defined by $ B_{k}(x_{1},\ldots,\,_{\rho(1)}x,\ldots,\,_{\rho(k-1)}x,\ldots,x_{m})=$
$$\varphi_{1}(x_{1})\cdots\,_{\rho(1)}\varphi(x_{{\rho(1)}})
\cdots\,_{\rho(k-1)}\varphi(x_{\rho(k-1)})\cdots\varphi_{m}(x_{m}),$$
is a multilinear form. It is also clear that $B_{k}$ is positive because each  $\varphi_{i}$ is a Riesz homomorphism. In particular, $B_{k}\in\mathcal{L}_{r}( E_{1},\ldots,\,_{\rho(1)}E,\ldots, \,_{\rho(k-1)}E,\ldots, E_{m})$. %Putting $$\varphi=A_{k}(x_{1},\ldots,_{\rho(1)}x,\ldots,_{\rho(k)}x,\bullet,\ldots,x_{m}),  k=1,\ldots,m-1,$$
For $k = 1, \ldots, m-1$ and every $x_{\rho(k)}\in E_{\rho(k)}$,
\begin{align*}
B_{k}(x_{1},\ldots,\,_{\rho(1)}x,&\ldots,\,_{\rho(k)}x,\bullet,\ldots,x_{m})(x_{\rho(k)})=B_{k}(x_{1},\ldots,\,_{\rho(1)}x,\ldots,\,_{\rho(k-1)}x,\ldots,x_{m})\\
&=\varphi_{1}(x_{1})\cdots\,_{\rho(1)}\varphi(x_{\rho(1)})\cdots\,_{\rho(k-1)}\varphi(x_{\rho(k-1)})\cdots\varphi_{m}(x_{m})\\
&=\varphi_{1}(x_{1})\cdots\,_{\rho(1)}\varphi(x_{\rho(1)})\cdots\,_{\rho(k)}\varphi(x_{\rho(k)})\cdots\varphi_{m}(x_{m})\varphi_{\rho(k)}(x_{\rho(k)})\\
&=B_{k+1}(x_{1},\ldots,\,_{\rho(1)}x,\ldots,\,_{\rho(k)}x,\ldots,x_{m})\varphi_{\rho(k)}(x_{\rho(k)}),
\end{align*}
that is  $B_{k}(x_{1},\ldots,\,_{\rho(1)}x,\ldots,\,_{\rho(k)}x,\bullet,\ldots,x_{m})=B_{k+1}(x_{1},\ldots,\,_{\rho(1)}x,\ldots,\,_{\rho(k)}x,\ldots,x_{m})\varphi_{\rho(k)}$, $k=1,\ldots,m-1$. And for $k=m$ we have $B_{m}=\varphi_{\rho(m)}$. Given $x_{\rho(k)}^{\prime\prime}\in E_{\rho(k)}^{\sim\sim}$, $k=1,\ldots,m-1$, by Proposition \ref{operadores positivos} we have %,  com $B_{k}=\overline{x_{\rho(k)}^{\prime\prime}}^{\rho}(A_{k})$,
\begin{align*}
\overline{x_{\rho(k)}^{\prime\prime}}^{\rho}(B_{k})(x_{1},&\ldots,\,_{\rho(1)}x,\ldots,\,_{\rho(k)}x,\ldots,x_{m})=x_{\rho(k)}^{\prime\prime}(B_{k}(x_{1},\ldots,\,_{\rho(1)}x,\ldots,\,_{\rho(k)}x;\bullet\,;\ldots,x_{m}))\\
&=x_{\rho(k)}^{\prime\prime}(B_{k+1}(x_{1},\ldots,\,_{\rho(1)}x,\ldots,\,_{\rho(k)}x,\ldots,x_{m})\varphi_{\rho(k)})\\
&= B_{k+1}(x_{1},\ldots,\,_{\rho(1)}x,\ldots,\,_{\rho(k)}x,\ldots,x_{m})x_{\rho(k)}^{\prime\prime}(\varphi_{\rho(k)})).
\end{align*}
It follows that  $\overline{x_{\rho(k)}^{\prime\prime}}^{\rho}(B_{k})=x_{\rho(k)}^{\prime\prime}(\varphi_{\rho(k)}))B_{k+1}$ for $k=1,\ldots,m-1$. And for $k=m$ we have $\overline{x_{\rho(m)}^{\prime\prime}}^{\rho}(B_{m})=\overline{x_{\rho(m)}^{\prime\prime}}^{\rho}(\varphi_{\rho(m)})=x_{\rho(m)}^{\prime\prime}(\varphi_{\rho(m)})$. Using that $B=B_{1}$, for all $x_{1}^{\prime\prime}\in E_{1}^{\sim\sim},\ldots,x_{m}^{\prime\prime}\in E_{m}^{\sim\sim}$,
\begin{align*}
AR_{m}^{\rho}(B)(x_{1}^{\prime\prime},\ldots,x_{m}^{\prime\prime})&=\big(\overline{x_{\rho(m)}^{\prime\prime}}^{\rho}\circ\cdots\circ \overline{x_{\rho(1)}^{\prime\prime}}^{\rho}\big)(B)=\big(\overline{x_{\rho(m)}^{\prime\prime}}^{\rho}\circ\cdots\circ \overline{x_{\rho(1)}^{\prime\prime}}^{\rho}\big)(B_{1})\\
&=\overline{x_{\rho(m)}^{\prime\prime}}^{\rho}\big(\cdots\big( \overline{x_{\rho(2)}^{\prime\prime}}^{\rho}\big(\overline{x_{\rho(1)}^{\prime\prime}}^{\rho}(B_{1})\big)\big)\cdots \big)\\
&=\overline{x_{\rho(m)}^{\prime\prime}}^{\rho}\big(\cdots\big( \overline{x_{\rho(2)}^{\prime\prime}}^{\rho}\big(x_{\rho(1)}^{\prime\prime}(\varphi_{\rho(1)})B_{2}\big)\big)\cdots \big)\\
&=x_{\rho(1)}^{\prime\prime}(\varphi_{\rho(1)})\overline{x_{\rho(m)}^{\prime\prime}}^{\rho}\big(\cdots\big( \overline{x_{\rho(3)}^{\prime\prime}}^{\rho} \big( \overline{x_{\rho(2)}^{\prime\prime}}^{\rho}(B_{2})\big) \big)\cdots\big)\\
&=x_{\rho(1)}^{\prime\prime}(\varphi_{\rho(1)})\overline{x_{\rho(m)}^{\prime\prime}}^{\rho}\big(\cdots\big( \overline{x_{\rho(3)}^{\prime\prime}}^{\rho} \big( x_{\rho(1)}^{\prime\prime}(\varphi_{\rho(2)})B_{3}\big) \big)\cdots\big)\\
&=x_{\rho(1)}^{\prime\prime}(\varphi_{\rho(1)})x_{\rho(2)}^{\prime\prime}(\varphi_{\rho(2)})\overline{x_{\rho(m)}^{\prime\prime}}^{\rho}\big(\cdots\big( \overline{x_{\rho(3)}^{\prime\prime}}^{\rho} ( B_{3}) \big)\cdots\big)\\
&\,\,\,\vdots\\
&=x_{\rho(1)}^{\prime\prime}(\varphi_{\rho(1)})\cdots x_{\rho(m-2)}^{\prime\prime}(\varphi_{\rho(m-2)})\overline{x_{\rho(m)}^{\prime\prime}}^{\rho}\big( \overline{x_{\rho(m-1)}^{\prime\prime}}^{\rho} ( B_{m-1}) \big)\\
&=x_{\rho(1)}^{\prime\prime}(\varphi_{\rho(1)})\cdots x_{\rho(m-2)}^{\prime\prime}(\varphi_{\rho(m-2)})\overline{x_{\rho(m)}^{\prime\prime}}^{\rho}\big( x_{\rho(m-1)}^{\prime\prime}(\varphi_{\rho(m-1)})B_{m} \big)\\
&=x_{\rho(1)}^{\prime\prime}(\varphi_{\rho(1)})\cdots x_{\rho(m-2)}^{\prime\prime}(\varphi_{\rho(m-2)})x_{\rho(m-1)}^{\prime\prime}(\varphi_{\rho(m-1)})\overline{x_{\rho(m)}^{\prime\prime}}^{\rho}\big( \varphi_{\rho(m)} \big)\\
&=x_{\rho(1)}^{\prime\prime}(\varphi_{\rho(1)})\cdots x_{\rho(m-2)}^{\prime\prime}(\varphi_{\rho(m-2)})x_{\rho(m-1)}^{\prime\prime}(\varphi_{\rho(m-1)})x_{\rho(m)}^{\prime\prime}(\varphi_{\rho(m)}),
\end{align*}
that is, %Portanto, para todo $x_{1}^{\prime\prime}\in E_{1}^{\sim\sim},\ldots,x_{m}^{\prime\prime}\in E_{m}^{\sim\sim}$
$$AR_{m}^{\rho}(B)(x_{1}^{\prime\prime},\ldots,x_{m}^{\prime\prime})=x_{1}^{\prime\prime}(\varphi_{1})\cdots x_{m}^{\prime\prime}(\varphi_{m})=\varphi_{1}^{\prime\prime}(x_{1}^{\prime\prime})\cdots \varphi_{m}^{\prime\prime}(x_{m}^{\prime\prime}),$$
where $\varphi_{i}^{\prime\prime}$ is the second adjoint of the Riesz homomorphism $\varphi_{i}$, $i=1,\ldots,m$. %Como os funcionais $\varphi_{i}$ são homomorfismos de Riesz,
So, the extension does not depend on $\rho$ and, since $\varphi_{1}^{\prime\prime}, \ldots, \varphi_m^{''}$ are Riesz homomorphisms \cite[Theorems 2.19 and 2.20]{positiveoperators}, it follows immediately that $AR_{m}^{\rho}(B)$ is a Riesz multimorphism.

Now, let $n \in \mathbb{N}$ be given, consider $\mathbb{R}^n$ with the coordinatewise order and let  $C \colon E_1 \times \cdots \times E_{m} \longrightarrow \mathbb{R}^n$ be a  a Riesz multimorphism. By Theorem \ref{propriedadesherdadas} we know that $ AR_{m}^{\rho}(C)\colon E_{1}^{\sim\sim}\times
\cdots\times E_{m}^{\sim\sim}\longrightarrow (\mathbb{R}^n)^{\sim\sim} = \mathbb{R}^n$ is a regular $m$-linear form. Taking the canonical projections $\pi_j \colon \mathbb{R}^n \longrightarrow \mathbb{R}$, $j = 1, \ldots, n$, which are Riesz multimorphisms, each $\pi_j \circ C$ is a scalar-valued Riesz multimorphism, so the first part of the proof gives that $ AR_{m}^{\rho}(\pi_j \circ C)\colon E_{1}^{\sim\sim}\times
\cdots\times E_{m}^{\sim\sim}\longrightarrow \mathbb{R}$ is a Riesz multimorphism that does not depend on $\rho$. By Remark \ref{remmer},
$$ \pi_j \circ  AR_{m}^{\rho}(C) =  \pi_j'' \circ  AR_{m}^{\rho}(C)= AR_{m}^{\rho}(\pi_j \circ C)$$
is a Riesz multimorphism for every $j= 1, \ldots, n$. From
$$AR_{m}^{\rho}(C)(x_1'', \ldots, x_m'') = ((\pi_j \circ AR_{m}^{\rho}(C)(x_1'', \ldots, x_m''))_{j=1}^n $$
it follows that $AR_{m}^{\rho}(C)$ does not depend on $\rho$ and is a Riesz multimorphism because we have the coordinatewise order in $\mathbb{R}^n$.

Finally, given an Archimedean Riesz space $F$  and a finite rank Riesz multimorphism $A \colon E_1 \times \cdots \times E_{m} \longrightarrow F$, call $G$ the finite-dimensional sublattice of $F$ generated by the range of $A$, denote by $i \colon G \longrightarrow F$ the inclusion operator and note that the astriction of $A$ to $G$, $A_1 \colon E_1 \times \cdots \times E_{m} \longrightarrow G$, is a Riesz multimorphism. Since $G$ is Archimedean, letting $n \in \mathbb{N}$ denote its dimension,  by \cite[Corollary 1, p.\,70]{schaefer} there is a Riesz isomorphism $I \colon G \longrightarrow \mathbb{R}^n$, where $\mathbb{R}^n$ has the coordinatewise order. Noting that $I \circ A_1$ is a a $\mathbb{R}^n$-valued Riesz homomorphism, the second part of the proof gives that $AR_{m}^{\rho}(I\circ A_1)$ is a Riesz multimorphism not depending on $\rho$. By  Remark \ref{remmer},
$$AR_{m}^{\rho}(A) = AR_{m}^{\rho}(i \circ I^{-1} \circ I \circ A_1) = i'' \circ (I^{-1})''\circ AR_{m}^{\rho}(I \circ A_1)$$
is a Riesz multimorphism, not depending on $\rho$, because $i''$ and $(I^{-1})''$ are Riesz homomorphisms.
\end{proof}

\begin{remark}\rm Let $E_{1},\ldots,E_{m}$ be Riesz spaces, $(\mathcal{A}, \ast)$ be an $f$-algebra and $A\colon E_{1}\times\cdots\times E_{m}\longrightarrow \mathcal{A}$ be a multiplicative Riesz multimorphism, that is, $A(x_{1},\ldots ,x_{m})=T_{1}(x_{1})\ast \cdots\ast T_{m}(x_{m})$ for all $x_{i}\in E_{i}, i=1,\ldots,m$, where each $T_{i}\colon E_{i}\longrightarrow \mathcal{A}$ is a  Riesz homomorphism. % um operador $m$-linear. Suponha que existem homomorfismos de Riesz $T_{i}\colon E_{i}\longrightarrow \mathcal{A}, i=1,\ldots,m$ tais que .
The first part of the proof above can be adapted to show that all Arens extensions of $A$ are Riesz multimorphisms. Actually, the reasoning shows that, for every $ \rho\in S_{m}$  and all   $x_{i}^{\prime\prime}\in E_{i}^{\sim\sim}, i=1,\ldots,m$,
$$AR_{m}^{\rho}(A)(x_{1}^{\prime\prime},\ldots,x_{m}^{\prime\prime})=(x_{\rho(m)}^{\prime\prime}\circ T_{\rho(m)}^{\prime})\odot\cdots\odot (x_{\rho(1)}^{\prime\prime}\circ T_{\rho(1)}^{\prime}),$$
where $\odot$ is the Arens product that makes $\mathcal{A}^{\sim\sim}$ an $f$-algebra.

This shows, in particular, that  all Arens extensions of any Riesz multimorphism taking values in a universally complete Riesz space with a weak order unit are Riesz multimorphisms.
\end{remark}

The following partial results for arbitrary Riesz multimorphisms will be helpful later.

\begin{proposition} \label{corohmcve1}
Let $E_1, \ldots, E_m, F$ be Riesz spaces, $A \in {\cal L}_r(E_1, \ldots, E_m;F)$ be a Riesz multimorphism and $\rho \in S_m$. For every Riesz homomorphism $y' \in F^{\sim}$, $J_{F^{\sim}}(y') \circ AR_m^{\rho}(A)$ is a Riesz multimorphism and
$$|AR_{m}^{\rho}(A)(x_{1}^{\prime\prime},\ldots,x_{m}^{\prime\prime})|(y^{\prime})=AR_{m}^{\rho}(A)(|x_{1}^{\prime\prime}|,\ldots,|x_{m}^{\prime\prime}|)(y^{\prime})$$
for all $x_{1}^{\prime\prime}\in E_{1}^{\sim\sim},\ldots,x_{m}^{\prime\prime}\in E_{m}^{\sim\sim}$.
\end{proposition}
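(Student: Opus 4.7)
The plan is to reduce both assertions to the scalar-valued case, where Theorem \ref{AronBernerhr} applies. First I would observe that since $y'\in F^\sim$ is a Riesz homomorphism and $A$ is a Riesz multimorphism, the composition $y'\circ A\colon E_1\times\cdots\times E_m\longrightarrow\mathbb{R}$ is a scalar-valued Riesz multimorphism: for every $x_i\in E_i$,
$$|y'(A(x_1,\ldots,x_m))|=y'(|A(x_1,\ldots,x_m)|)=y'(A(|x_1|,\ldots,|x_m|)).$$

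Since $\mathbb{R}$ is one-dimensional, $y'\circ A$ is automatically a finite rank Riesz multimorphism into the Archimedean Riesz space $\mathbb{R}$, so Theorem \ref{AronBernerhr} yields that $AR_m^\rho(y'\circ A)\colon E_1^{\sim\sim}\times\cdots\times E_m^{\sim\sim}\longrightarrow\mathbb{R}$ is itself a Riesz multimorphism. Viewing $y'$ as an element of $\mathcal{L}_r(F;\mathbb{R})$, I would invoke Remark \ref{remmer} to write $AR_m^\rho(y'\circ A)=(y')''\circ AR_m^\rho(A)$, and check from the definition of the second adjoint that $(y')''=J_{F^\sim}(y')$. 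This proves the first assertion.

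For the displayed equation, I would set $f=AR_m^\rho(A)(x_1^{\prime\prime},\ldots,x_m^{\prime\prime})$ and $g=AR_m^\rho(A)(|x_1^{\prime\prime}|,\ldots,|x_m^{\prime\prime}|)$. Since $A$ is a Riesz multimorphism it is positive, so Theorem \ref{propriedadesherdadas}(c) gives that $AR_m^\rho(A)$ is positive. Expanding each $x_i^{\prime\prime}=(x_i^{\prime\prime})^+-(x_i^{\prime\prime})^-$ and applying $m$-linearity plus the triangle inequality to the resulting $2^m$ positive terms, I would obtain the general inequality $|f|\leq g$ in $F^{\sim\sim}$; evaluating at the positive functional $y'$ yields $|f|(y')\leq g(y')$. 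The first assertion gives the scalar identity $|f(y')|=g(y')$, and together with the trivial estimate $|f(y')|\leq|f|(y')$ (which follows from $\pm f\leq|f|$ and $y'\geq 0$) this forces $g(y')\leq|f|(y')\leq g(y')$, so $|f|(y')=g(y')$ as desired.

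The main obstacle is that the second assertion is not a formal consequence of the first: the notation $|AR_m^\rho(A)(x_1^{\prime\prime},\ldots,x_m^{\prime\prime})|(y')$ refers to the modulus taken in $F^{\sim\sim}$ and then evaluated at $y'$, rather than to the absolute value of the scalar $AR_m^\rho(A)(x_1^{\prime\prime},\ldots,x_m^{\prime\prime})(y')$. The sandwich argument above, combining the universally available inequality $|f|\leq g$ for any positive multilinear operator with the scalar Riesz multimorphism identity from the first step, bypasses any need to prove directly that $y'$ is an atom in $F^\sim$.
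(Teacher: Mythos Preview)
Your proof is correct and follows essentially the same route as the paper: both reduce the first assertion to Theorem~\ref{AronBernerhr} applied to the scalar-valued Riesz multimorphism $y'\circ A$ and then identify $AR_m^\rho(y'\circ A)$ with $J_{F^\sim}(y')\circ AR_m^\rho(A)$ (you via Remark~\ref{remmer}, the paper by the same one-line computation), and both derive the displayed equality from the sandwich $g(y')=|f(y')|\le |f|(y')\le g(y')$ using positivity of $AR_m^\rho(A)$ for the right inequality.
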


\begin{proof} Let a Riesz homomorphism $y' \in F^{\sim}$ be given. Then $y' \circ A \in {\cal L}_r(E_1, \ldots, E_m)$ is a Riesz multimorphism, so $AR^\rho_m(y' \circ A)\colon E_{1}^{\sim\sim}\times\cdots\times E_{m}^{\sim\sim}\longrightarrow \mathbb{R}$ is a Riesz multimorphism by Theorem \ref{AronBernerhr}. To prove the first assertion it is enough to check that $J_{F^{\sim}}(y') \circ AR_m^{\rho}(A) = AR^\rho_m(y' \circ A)$: %. Mais, isto segue das definições de $AR^\rho_m(\varphi \circ A)$ e $AR^\rho_m (A)$. Sejam
for all $x_{1}^{\prime\prime}\in E_{1}^{\sim\sim}, \ldots,x_{m}^{\prime\prime}\in E_{m}^{\sim\sim}$,
\begin{align*}
(J_{F^{\sim}}(y') \circ AR_m^{\rho}(A))(x_{1}^{\prime\prime},\ldots,x_{m}^{\prime\prime})&=J_{F^{\sim}}(y')(AR_m^{\rho}(A)(x_{1}^{\prime\prime},\ldots,x_{m}^{\prime\prime}))\\
&=AR_m^{\rho}(A)(x_{1}^{\prime\prime},\ldots,x_{m}^{\prime\prime})(y')\\
&=\big(\overline{x_{\rho(m)}^{\prime\prime}}^{\rho}\circ\cdots\circ \overline{x_{\rho(1)}^{\prime\prime}}^{\rho} \big)(y'\circ A)\\
&=AR_m^{\rho}(y'\circ A)(x_{1}^{\prime\prime},\ldots,x_{m}^{\prime\prime}).
\end{align*}
%Então $J_{F^{\sim}}(\varphi) \circ AR_m^{\rho}(A)=AR_m^{\rho}(\varphi\circ A)$.\\
%{\rm (b)} Sejam $A$ um  multimorfismo de Riesz e $y^{\prime}\in F^{\sim}$ um homomorfismo de Riesz, então por $1$,

Once we have just proved that $J_{F^{\sim}}(y^{\prime})\circ AR_{m}^{\rho}(A)$ is a Riesz multimorphism and using that $AR_m^{\rho}(A) \geq 0$  (Theorem \ref{propriedadesherdadas}) and $y' \geq 0$,  for all  $x_{1}^{\prime\prime}\in E_{1}^{\sim\sim},\ldots,x_{m}^{\prime\prime}\in E_{m}^{\sim\sim}$ we have
%$$(J_{F^{\sim}}(y^{\prime}) \circ AR_m^{\rho}(A))(x_{1}^{\prime\prime},\ldots,x_{m}^{\prime\prime})=AR_m^{\rho}(A)(x_{1}^{\prime\prime},\ldots,x_{m}^{\prime\prime})(y^{\prime}),$$
%logo
\begin{align*}
AR_m^{\rho}(A)(|x_{1}^{\prime\prime}|,\ldots,|x_{m}^{\prime\prime}|)(y^{\prime})&=(J_{F^{\sim}}(y^{\prime}) \circ AR_m^{\rho}(A))(|x_{1}^{\prime\prime}|,\ldots,|x_{m}^{\prime\prime}|)\\
&=|(J_{F^{\sim}}(y^{\prime}) \circ AR_m^{\rho}(A))(x_{1}^{\prime\prime},\ldots,x_{m}^{\prime\prime})|\\
&=|AR_m^{\rho}(A)(x_{1}^{\prime\prime},\ldots,x_{m}^{\prime\prime})(y^{\prime})|\\
&\leq |AR_m^{\rho}(A)(x_{1}^{\prime\prime},\ldots,x_{m}^{\prime\prime})|(y^{\prime})\\
&\leq AR_m^{\rho}(A)(|x_{1}^{\prime\prime}|,\ldots,|x_{m}^{\prime\prime}|)(y^{\prime}),
\end{align*}
from which the second assertion follows.
%begin{align*}
%|AR_m^{\rho}(A)(x_{1}^{\prime\prime},\ldots,x_{m}^{\prime\prime})(y^{\prime})|&=|(J_{F^{\sim}}(y^{\prime}) \circ AR_m^{\rho}(A))(x_{1}^{\prime\prime},\ldots,x_{m}^{\prime\prime})|\\
%&=(J_{F^{\sim}}(y^{\prime}) \circ AR_m^{\rho}(A))(|x_{1}^{\prime\prime}|,\ldots,|x_{m}^{\prime\prime}|)\\
%&=AR_m^{\rho}(A)(|x_{1}^{\prime\prime}|,\ldots,|x_{m}^{\prime\prime}|)(y^{\prime}).
%\end{align*}
%Assim,  pelo Teorema \ref{propriedadesherdadas},
%\begin{align*}
%AR_m^{\rho}(A)(|x_{1}^{\prime\prime}|,\ldots,|x_{m}^{\prime\prime}|)(y^{\prime})&=|AR_m^{\rho}(A)(x_{1}^{\prime\prime},\ldots,x_{m}^{\prime\prime})(y^{\prime})|\\
%&\leq |AR_m^{\rho}(A)(x_{1}^{\prime\prime},\ldots,x_{m}^{\prime\prime})|(y^{\prime})\\
%&\leq AR_m^{\rho}(A)(|x_{1}^{\prime\prime}|,\ldots,|x_{m}^{\prime\prime}|)|(y^{\prime}).
%\end{align*}
%segue que $|AR_m^{\rho}(A)(x_{1}^{\prime\prime},\ldots,x_{m}^{\prime\prime})|(y^{\prime})=AR_m^{\rho}(A)(|x_{1}^{\prime\prime}|,\ldots,|x_{m}^{\prime\prime}|)(y^{\prime}).$
\end{proof}

Henceforth we present our results in the environment of Banach lattices. We start with the following immediate consequences of our previous results. %In the Banach space environment, Arens extensions are usually called Aron-Berner extensions. We shall follow this trend.

\begin{proposition} Let $E_1, \ldots, E_m,F$ be Banach lattices.\\
{\rm (a)} If $A\in\mathcal{L}_{r}(E_{1},\ldots,E_{m};F)$ is a finite rank Riesz multimorphism, then the Aron-Berner extensions of $A$, $AB_{m}^{\rho}(A)\colon E_{1}^{**}\times
\cdots\times E_{m}^{**}\longrightarrow F^{**}$, $\rho \in S_m$, coincide and are Riesz multimorphisms.\\
{\rm (b)} If $E_1^*, \ldots, E_{m}^*$ have order continuous norms and $A\in\mathcal{L}_{r}(E_{1},\ldots,E_{m};F)$ is a Riesz multimorphism, then, for every $\rho \in S_m$,  the Aron-Berner extension $AB_{m}^{\rho}(A)\colon E_{1}^{\ast\ast}\times
\cdots\times E_{m}^{\ast\ast}\longrightarrow F^{\ast\ast}$ is a Riesz multimorphism.
\end{proposition}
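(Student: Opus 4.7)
My plan for proving this proposition is to reduce both statements to previously established results in the Riesz space setting, exploiting the fact that for any Banach lattice $E$, the topological dual coincides with the order dual (so $E^* = E^\sim$ and hence $E^{**} = E^{\sim\sim}$), and that the Aron-Berner extension $AB_m^{\rho}(A)$ is defined by exactly the same formulas as the Arens extension $AR_m^{\rho}(A)$ constructed in Section \ref{section2}; under the identification $E^* = E^\sim$, the two extensions coincide as maps on the bidual.

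For part (a), every Banach lattice is Archimedean, and a finite rank Riesz multimorphism between Banach lattices is also a finite rank Riesz multimorphism between Riesz spaces with Archimedean target. The hypotheses of Theorem \ref{AronBernerhr} are therefore satisfied, so all $AR_m^{\rho}(A)$ coincide and are Riesz multimorphisms; transferring this via the identification $AR_m^{\rho}(A) = AB_m^{\rho}(A)$ yields the stated conclusion.

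For part (b), I would combine Proposition \ref{fpro} with the standard fact that if the norm of $E_i^*$ is order continuous, then $(E_i^*)_n^* = E_i^{**}$. The latter identity is straightforward: given a net $x'_\alpha \downarrow 0$ in $E_i^*$, order continuity of the dual norm gives $\|x'_\alpha\| \downarrow 0$, so for any $x'' \in E_i^{**}$ we have $|x''(x'_\alpha)| \leq \|x''\|\, \|x'_\alpha\| \to 0$, showing that $x''$ is order continuous on $E_i^*$ and hence belongs to $(E_i^*)_n^*$. Since Banach lattices are Archimedean, Proposition \ref{fpro} asserts that the restriction of $AR_m^{\rho}(A)$ to $(E_1^*)_n^* \times \cdots \times (E_m^*)_n^* = E_1^{**} \times \cdots \times E_m^{**}$ is a Riesz multimorphism, and this restriction is the full map $AB_m^{\rho}(A)$.

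The main subtlety, and it is really a matter of bookkeeping rather than an obstacle, is the identification of the Aron-Berner extension with the Arens extension in the Banach lattice setting. Once one checks that the purely order-theoretic construction used to define $AR_m^{\rho}(A)$ coincides with the standard norm-dual construction of $AB_m^{\rho}(A)$ under $E^* = E^\sim$, both parts fall out as immediate corollaries of Theorem \ref{AronBernerhr} and Proposition \ref{fpro}, respectively.
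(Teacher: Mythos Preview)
Your proposal is correct and follows essentially the same approach as the paper: both parts are reduced to the Riesz-space results via the identification $E^\sim = E^*$ (hence $E^{\sim\sim} = E^{**}$ and $AR_m^\rho(A) = AB_m^\rho(A)$), with (a) following from Theorem~\ref{AronBernerhr} since Banach lattices are Archimedean, and (b) from Proposition~\ref{fpro} together with the fact that order continuity of the norm on $E_i^*$ forces $(E_i^*)_n^* = E_i^{**}$. The paper simply cites the latter identity from the literature, whereas you sketch the direct verification, but the strategy is identical.
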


\begin{proof} For a Banach lattice $E$, $E^\sim$ is a Banach lattice and $E^{\sim}=E^*$, thus $E^{\sim\sim}=E^{**}$  \cite[Theorem 4.1 and Corollary 4.5]{positiveoperators}. Thus, (a) follows from Theorem \ref{AronBernerhr}. And   $(E^{\sim})_{n}^{\sim}=E^{\ast\ast}$ whenever $E^*$ has order continuous norm \cite[Theorem 2.4.2]{nieberg}. Thus, (b) follows from Proposition \ref{fpro}. %Seja $E$ um reticulado de Banach, por \cite[Corolário4.5]{positiveoperators} $E^{\ast}=E^{\sim}$ e se $E$ tem norma ordem contínuo, então por \cite[Teorema 2.4.2]{nieberg} $E^{\ast}=E_{n}^{\ast}$, assim se $E^{\ast}$ tem norma ordem contínuo $(E^{\sim})_{n}^{\sim}=E^{\ast\ast}$. Assim temos o seguinte resultado.
\end{proof}

The next results concern Aron-Berner extensions of vector-valued Riesz homomorphisms defined on arbitrary Banach lattices.
%\begin{corollary}
%Sejam $\rho\in S_{m}$ e $E_{1},\ldots,E_{m}, F$ reticulados de Banach tais que $E_{j}^{\ast}, j=1,\ldots,m$ tem norma ordem contínuo. Se $A\in\mathcal{L}_{r}(E_{1},\ldots,E_{m};F)$ é um multimorfismo de Riesz, então as extensões de Aron-Berner $AB_{m}^{\rho}(A)\colon E_{1}^{\ast\ast}\times
%\cdots\times E_{m}^{\ast\ast}\longrightarrow F^{\ast\ast}$ são multimorfismos de Riesz.
%\end{corollary}
%
%
%
%\begin{corollary}\label{corone}
%Let $E_{1},\ldots,E_{m}$ be Banach lattices and $A\in\mathcal{L}_{r}(E_{1},\ldots,E_{m})$ be a Riesz multimorphism. Then all  Aron-Berner extensions de $A$, $AB_{m}^{\rho}(A)\colon E_{1}^{**}\times
%\cdots\times E_{m}^{**}\longrightarrow \mathbb{R}%,\, AB_{m}^{\rho}(A)(x_{1}^{\prime\prime},\ldots,x_{m}^{\prime\prime})=\big(\overline{x_{
%%\sigma(m)}^{\prime\prime}}^{\sigma}\circ\cdots\circ\overline
%%{x_{\sigma(1)}^{\prime\prime}}^{\sigma}\big)(A),
%$, $\rho\in S_{m}$, are Riesz multimorphisms.
%\end{corollary}
%
%
%
%In the vector-valued case we know a bit more than we do for Riesz spaces.

\begin{proposition} \label{corohmcve2}
Let $E_1, \ldots, E_m, F$ be Banach lattices, $A \in {\cal L}_r(E_1, \ldots, E_m;F)$ be a Riesz multimorphism and $\rho \in S_m$. Then:\\
{\rm (a)} $y^{***} \circ AB_m^{\rho}(A)$ is a Riesz multimorphism for every $w^*$-continuous Riesz homomorphism  $y^{***} \in F^{***}$.\\
{\rm (b)} It holds
$$|AB_{m}^{\rho}(A)(x_{1}^{**},\ldots,x_{m}^{**})|(y^*)=AB_{m}^{\rho}(A)(|x_{1}^{**}|,\ldots,|x_{m}^{**}|)(y^{*}),$$
for all $x_{1}^{**}\in E_{1}^{**},\ldots,x_{m}^{**}\in E_{m}^{**}$ and any $y^* \in \overline{{\rm span}}\{\varphi\in F^{\ast}:\varphi \text{ is a Riesz homomorphism}\}.$
\end{proposition}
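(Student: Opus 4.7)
The plan is to deduce both parts from Proposition \ref{corohmcve1}, exploiting the fact that for a Banach lattice $F$ one has $F^{\sim}=F^{*}$ and $F^{\sim\sim}=F^{**}$, so $AR_{m}^{\rho}(A)=AB_{m}^{\rho}(A)$ in this setting. Nothing new has to be proved about the extension itself; the work is entirely in identifying the relevant functionals on $F^{**}$ with objects that Proposition \ref{corohmcve1} already controls.

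For part (a) the first step is to invoke the standard duality fact that a linear functional on $F^{**}$ is $w^{*}$-continuous if and only if it lies in $J_{F^{*}}(F^{*})$. Consequently the hypothesis provides some $y^{*}\in F^{*}$ with $y^{***}=J_{F^{*}}(y^{*})$. I would then verify that $y^{*}$ is itself a Riesz homomorphism on $F$: evaluating the Riesz homomorphism identity $|y^{***}(x^{**})|=y^{***}(|x^{**}|)$ at $x^{**}=J_{F}(x)$ and using that $J_{F}$ is a Riesz homomorphism gives $|y^{*}(x)|=y^{*}(|x|)$ for every $x\in F$. Once this is established, Proposition \ref{corohmcve1}, applied to $y^{*}\in F^{\sim}=F^{*}$, yields that
$$y^{***}\circ AB_{m}^{\rho}(A)=J_{F^{*}}(y^{*})\circ AR_{m}^{\rho}(A)$$
is a Riesz multimorphism, which is exactly the claim.

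For part (b), Proposition \ref{corohmcve1} already gives the identity
$$|AB_{m}^{\rho}(A)(x_{1}^{**},\ldots,x_{m}^{**})|(\varphi)=AB_{m}^{\rho}(A)(|x_{1}^{**}|,\ldots,|x_{m}^{**}|)(\varphi)$$
for every Riesz homomorphism $\varphi\in F^{*}$. I would then observe that, as functions of the $F^{*}$-argument, both sides are evaluations of \emph{fixed} elements of $F^{**}$, namely $|AB_{m}^{\rho}(A)(x_{1}^{**},\ldots,x_{m}^{**})|$ and $AB_{m}^{\rho}(A)(|x_{1}^{**}|,\ldots,|x_{m}^{**}|)$. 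Thus both sides are linear and norm-continuous in $\varphi$. Linearity first extends the identity from Riesz homomorphisms to their linear span, and norm-continuity then extends it to the norm-closure of that span, which is precisely the set of $y^{*}$ for which the identity is claimed.

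The only step that requires any genuine verification (and which I expect to be the main, though still mild, obstacle) is the identification in (a) of $w^{*}$-continuous Riesz homomorphisms on $F^{**}$ with Riesz homomorphisms in $F^{*}$; everything else is a direct appeal to Proposition \ref{corohmcve1} plus linearity and continuity in the $F^{*}$-argument.
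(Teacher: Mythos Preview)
Your proposal is correct and follows essentially the same route as the paper: in (a) you identify $y^{***}=J_{F^{*}}(y^{*})$ and check that $y^{*}$ is a Riesz homomorphism (the paper phrases this as $y^{*}=y^{***}\circ J_{F}$, a composition of Riesz homomorphisms, which is your argument unpacked), then invoke Proposition \ref{corohmcve1}; in (b) the paper likewise uses Proposition \ref{corohmcve1} on each Riesz homomorphism, extends linearly to the span, and then by norm-continuity to the closure.
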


\begin{proof}
{\rm (a)} Given a $\omega^{\ast}$-continuous Riesz  homomorphisms  $y^{\ast\ast\ast}\in F^{\ast\ast\ast}$, take  $y^{\ast}\in F^{\ast}$ such that $y^{\ast\ast\ast}=J_{F^{\ast}}(y^{\ast})$. For every $y\in F$,
$$(y^{\ast\ast\ast}\circ J_{F})(y)=y^{\ast\ast\ast}(J_{F}(y))=J_{F^{\ast}}(y^{\ast})(J_{F}(y))=J_{F}(y)(y^{\ast})=y^{\ast}(y),$$
that is, $y^{\ast}=y^{\ast\ast\ast}\circ J_{F}$. As the composition of two Riesz homomorphisms, $y^{\ast}$ is a Riesz homomorphism as well. Corollary \ref{corohmcve1} gives that $y^{***} \circ AB_m^{\rho}(A) = J_{F^{\ast}}(y^{\ast})\circ AB_{m}^{\rho}(A)$ is a Riesz multimorphism.\\ %, isto é, $\varphi\circ AB_{m}^{\rho}(A)$ é multimorfismo de Riesz.\\
{\rm (b)} %Sejam $A \in {\cal L}_r(E_1, \ldots, E_m;F)$ um multimorfismo de Riesz,
Let $x_{1}^{\ast\ast}\in E_{1}^{\ast\ast},\ldots,x_{m}^{\ast\ast}\in E_{m}^{\ast\ast}$ and  $y^{\ast}\in \overline{{\rm span}}\{\varphi\in F^{\ast}:\varphi \text{ is a Riesz homomorphism}\}$ be given. Take a sequence $(y_{n}^{\ast})_{n=1}^{\infty}$ in ${\rm span}\{\varphi\in F^{\ast}:\varphi \text{ is a Riesz homomorphism}\}$  such that $y_{n}^{\ast}\longrightarrow y^{\ast}$. For each  $n\in\mathbb{N}$ there are $k_n \in \mathbb{N}$, Riesz homomorphisms $\varphi^1_n,\ldots,\varphi_n^{k_n}\in F^{\ast}$ and scalars $\alpha_n^1,\ldots,\alpha_n^{k_n}$ such that  $y_{n}^{\ast}=\sum\limits_{j=1}^{k_n}\alpha_n^{j}\varphi_n^j$. The continuity of   $|AB_{m}^{\rho}(A)(x_{1}^{\ast\ast},\ldots,x_{m}^{\ast\ast})|$ and  $AB_{m}^{\rho}(A)(|x_{1}^{\ast\ast}|,\ldots,|x_{m}^{\ast\ast}|)$ and Corollary \ref{corohmcve1} give
\begin{align*}
|AB_{m}^{\rho}(A)(x_{1}^{\ast\ast},\ldots,x_{m}^{\ast\ast})|(y^*)&=\lim_{n\rightarrow \infty} |AB_{m}^{\rho}(A)(x_{1}^{\ast\ast},\ldots,x_{m}^{\ast\ast})|(y_{n}^{\ast})\\
%&\textcolor{red}{=\lim_{n\rightarrow \infty} |AB_{m}^{\rho}(A)(x_{1}^{\ast\ast},\ldots,x_{m}^{\ast\ast})|\Big(\displaystyle\sum_{j=1}^{k_n}\alpha_n^j\varphi_n^j\Big)}\\
&=\lim_{n\rightarrow \infty} \displaystyle\sum_{j=1}^{k_n} \alpha_n^{j} |AB_{m}^{\rho}(A)(x_{1}^{\ast\ast},\ldots,x_{m}^{\ast\ast})|(\varphi_n^{j})\\
&=\lim_{n\rightarrow \infty} \displaystyle\sum_{j=1}^{k_n} \alpha_n^{j} AB_{m}^{\rho}(A)(|x_{1}^{\ast\ast}|,\ldots,|x_{m}^{\ast\ast}|)(\varphi_n^{j})\\
%&\textcolor{red}{=\lim_{n\rightarrow \infty} AB_{m}^{\rho}(A)(|x_{1}^{\ast\ast}|,\ldots,|x_{m}^{\ast\ast}|)\Big(\displaystyle\sum_{j=1}^{k_n}\alpha_n^{j}\varphi_n^{j}\Big)}\\
&=\lim_{n\rightarrow \infty} AB_{m}^{\rho}(A)(|x_{1}^{\ast\ast}|,\ldots,|x_{m}^{\ast\ast}|)(y_{n}^{\ast})\\
&=AB_{m}^{\rho}(A)(|x_{1}^{\ast\ast}|,\ldots,|x_{m}^{\ast\ast}|)(y^{\ast}).
\end{align*}
\end{proof}

From now on, for a Banach lattice $F$ the expression {\it all Aron-Berner extensions of any $F$-valued Riesz multimorphism are Riesz multimorphisms} means that, regardless of the natural number $m$ and the Banach lattices $E_1, \ldots, E_m$, all Aron-Berner extensions of any Riesz multimorphism from  $E_1 \times \cdots \times E_m$ to $F$ are Riesz multimorphisms on $E_1^{**} \times \cdots \times E_m^{**}$.

 Let $F$ be a Banach lattice such that $F^*$ has a Schauder basis formed by Riesz homomorphisms. Then $F^* = \overline{{\rm span}}\{\varphi\in F^{\ast}:\varphi \text{ is a Riesz homomorphism}\}$, so the next result follows immediately from Propositon \ref{corohmcve2}(b).

\begin{corollary}\label{corohmcve3} Let $F$ be a Banach lattice such that $F^*$ has a Schauder basis formed by Riesz homomorphisms. Then all Aron-Berner extensions of any $F$-valued Riesz multimorphism are Riesz multimorphisms.
%
% Let All Aron-Berner extensions of
%Sejam $\rho\in S_{m}$ uma permutação, $E_{1},\ldots,E_{m}, F$ reticulados de Banach tal que em $F^{\ast}$ existe uma base de Schauder consistindo de homomorfismos de Riesz e  $A\in\mathcal{L}_{r}(E_{1},\ldots,E_{m};F)$, um multimorfismo de Riesz, então todas as extensões de Aron-Berner
%$AB_{m}^{\rho}(A)\colon E_{1}^{\ast\ast}\times
%\cdots\times E_{m}^{\ast\ast}\longrightarrow F^{\ast\ast}$ são multimorfismos de Riesz.
\end{corollary}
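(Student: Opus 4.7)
The hypothesis provides, by definition of a Schauder basis, a sequence $(\varphi_n)_{n=1}^{\infty}$ of Riesz homomorphisms in $F^{*}$ such that every $y^{*}\in F^{*}$ admits an expansion $y^{*}=\sum_{n=1}^{\infty}\alpha_n\varphi_n$ converging in norm. The first step is to observe that the partial sums $\sum_{n=1}^{N}\alpha_n\varphi_n$ lie in $\mathrm{span}\{\varphi\in F^{*}:\varphi \text{ is a Riesz homomorphism}\}$, so their norm-limit $y^{*}$ lies in the closure of that span. Therefore
$$F^{*}=\overline{\mathrm{span}}\{\varphi\in F^{*}:\varphi \text{ is a Riesz homomorphism}\}.$$

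Now I would fix arbitrary Banach lattices $E_1,\ldots,E_m$, a Riesz multimorphism $A\in\mathcal{L}_r(E_1,\ldots,E_m;F)$, a permutation $\rho\in S_m$, and elements $x_1^{**}\in E_1^{**},\ldots,x_m^{**}\in E_m^{**}$. Proposition~\ref{corohmcve2}(b) is applicable precisely to the $y^{*}$ lying in the closed span above, which by the previous step equals all of $F^{*}$. Consequently, for every $y^{*}\in F^{*}$,
$$|AB_m^{\rho}(A)(x_1^{**},\ldots,x_m^{**})|(y^{*})=AB_m^{\rho}(A)(|x_1^{**}|,\ldots,|x_m^{**}|)(y^{*}).$$
Two functionals in $F^{**}$ coinciding on all of $F^{*}$ are equal, so
$$|AB_m^{\rho}(A)(x_1^{**},\ldots,x_m^{**})|=AB_m^{\rho}(A)(|x_1^{**}|,\ldots,|x_m^{**}|)$$
in $F^{**}$. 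Since $(x_1^{**},\ldots,x_m^{**})$ was arbitrary, this is exactly the identity defining a Riesz multimorphism for $AB_m^{\rho}(A)$.

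There is no genuine obstacle here: Proposition~\ref{corohmcve2}(b) already supplies the pointwise Riesz-multimorphism identity on a prescribed subset of $F^{*}$, and the Schauder-basis hypothesis is tailored so that this subset is dense and hence norm-total in $F^{*}$. The role of the present argument is merely to promote that pointwise identity on a dense subspace to the full identity in the bidual $F^{**}$.
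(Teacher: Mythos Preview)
Your proposal is correct and follows exactly the paper's approach: the paper simply notes that the Schauder-basis hypothesis gives $F^{*}=\overline{\mathrm{span}}\{\varphi\in F^{*}:\varphi \text{ is a Riesz homomorphism}\}$ and then invokes Proposition~\ref{corohmcve2}(b), which is precisely what you do with the details spelled out.
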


 Before giving concrete examples, let us see a simple result in the realm of Riesz spaces.

\begin{proposition}\label{nnpro} If $G$ is a projection band in the Riesz space $F$ and all Arens extensions of any $F$-valued Riesz multimorphism are Riesz multimorphisms, then the same holds for $G$-valued Riesz multimorphisms. \end{proposition}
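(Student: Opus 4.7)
The plan is to reduce the $G$-valued case to the known $F$-valued case by transferring along the inclusion $i\colon G \hookrightarrow F$ and the associated band projection $P\colon F \longrightarrow G$. Both maps exist and are Riesz homomorphisms precisely because $G$ is a projection band: for $P$, the disjoint decomposition $x = x_G + x_{G^{d}}$ yields $P(x \vee y) = x_G \vee y_G = P(x) \vee P(y)$, and of course $i$ is a Riesz homomorphism as the inclusion of a Riesz subspace. They satisfy $P \circ i = \mathrm{id}_G$.

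Now let $A \colon E_1 \times \cdots \times E_m \longrightarrow G$ be a Riesz multimorphism and $\rho \in S_m$. Then $i \circ A$ is an $F$-valued Riesz multimorphism, so by hypothesis $AR_m^\rho(i \circ A)$ is a Riesz multimorphism. By Remark \ref{remmer}, $AR_m^\rho(i \circ A) = i'' \circ AR_m^\rho(A)$, which gives us the multimorphism identity with an extra $i''$ on the outside.

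Taking second adjoints twice in $P \circ i = \mathrm{id}_G$ yields $P'' \circ i'' = \mathrm{id}_{G^{\sim\sim}}$; in particular $i''\colon G^{\sim\sim} \longrightarrow F^{\sim\sim}$ is injective. Moreover, $i''$ is itself a Riesz homomorphism, being the second adjoint of a Riesz homomorphism. Combining these two facts, for all $x_1'' \in E_1^{\sim\sim}, \ldots, x_m'' \in E_m^{\sim\sim}$,
\begin{align*}
i''\bigl(|AR_m^\rho(A)(x_1'',\ldots,x_m'')|\bigr)
&= \bigl|i''\circ AR_m^\rho(A)(x_1'',\ldots,x_m'')\bigr|\\
&= \bigl|AR_m^\rho(i\circ A)(x_1'',\ldots,x_m'')\bigr|\\
&= AR_m^\rho(i\circ A)(|x_1''|,\ldots,|x_m''|)\\
&= i''\bigl(AR_m^\rho(A)(|x_1''|,\ldots,|x_m''|)\bigr),
\end{align*}
and cancelling $i''$ via its injectivity produces the modulus identity that characterizes $AR_m^\rho(A)$ as a Riesz multimorphism.

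The only mildly delicate ingredient is the injectivity of $i''$, but this comes for free from the left-inverse $P$ provided by the projection band hypothesis, so I do not anticipate any real obstacle; the remainder of the argument is a formal transfer along Remark \ref{remmer}.
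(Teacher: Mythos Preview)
Your proof is correct and follows essentially the same strategy as the paper: reduce to the $F$-valued case via the inclusion $i$ and return via the band projection, using Remark \ref{remmer} and the fact that second adjoints of Riesz homomorphisms are Riesz homomorphisms. The only cosmetic difference is that the paper applies Remark \ref{remmer} with $u=\pi$ to write $AR_m^\rho(A)=AR_m^\rho(\pi\circ i\circ A)=\pi''\circ AR_m^\rho(i\circ A)$ directly as a composition of a Riesz homomorphism with a Riesz multimorphism, whereas you apply it with $u=i$ and then cancel the injective $i''$; these are equivalent maneuvers.
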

\begin{proof} Let $E_1, \ldots, E_m$ be Riesz spaces,  $A \in {\cal L}_r(E_1, \ldots, E_m;G)$ be a Riesz multimorphism and $\rho \in S_m$. Denoting by $i \colon G \longrightarrow F$ the inclusion operator and by $\pi \colon F \longrightarrow G$ the corresponding band projection, which is a Riesz homomorphism, it follows that
$$AR_m^\rho(A) = AR_m^\rho(\pi \circ i \circ A) = \pi'' \circ AR_m^\rho(i \circ A) $$
is a Riesz multimorphism since $AR_m^\rho(i \circ A)$ is a Riesz multimorphism by assumption and $\pi''$ is a Riesz homomorphism. 
\end{proof}

\begin{example}\label{ffex}\rm (a) The canonical unit vectors $(e_j)_{j=1}^\infty$ is a Schauder basis formed by Riesz homomorphisms in
  $\ell_p$, $1 \leq p < \infty$.
So, Corollary \ref{corohmcve3} applies to $F = c_0$ and $F = \ell_p$, $1 < p < \infty$.%\textcolor{red}{Since any separable Hilbert lattice is lattice isomorphic to $\ell_2$, the result also applies to separable Hilbert lattices. ????}\\

Henceforth in this example, whenever we say that a Banach space $F$ has an 1-uncondi-tional Schauder basis, $F$ is regarded as a Banach lattice with the order given by the basis. \\
(b) Let $F$ be a Banach space with an 1-unconditional Schauder basis $(x_j)_{j=1}^\infty$ not containing a copy of $\ell_1$. The basis is shrinking by \cite[Proposition 1.b.1]{ltI}, so the biorthogonal functionals $(x_j^*)_{j=1}^\infty$ associated to $(x_j)_{j=1}^\infty$ form a Schauder basis of $F^*$ by \cite[Proposition 1.b.1]{ltI}. It is easy to check that each $x_j^*$ is a Riesz homomorphism, so Corollary \ref{corohmcve3} applies to $F$. \\
(c) By (b), Corollary \ref{corohmcve3} applies to every reflexive Banach space with an 1-unconditional basis. Just to give reflexive examples different from $\ell_p$, $1 < p < \infty$, note that Tsirelson's original space $T^*$ \cite{tsirelson} and its dual $T$ are reflexive spaces with 1-unconditional bases \cite[Theorem I.8 and Notes and Remarks p.\,16)]{shura}. So, Corollary \ref{corohmcve3} applies to $F = T^*$ and $F = T$.\\
(d) Just to illustrate, let us give two nonreflexive examples different from $c_0$. Schreier's space $S$ \cite{schreier} and the predual $d_*(w,1)$ of the Lorenz sequence space $d(w,1)$ \cite[4.e]{ltI} are nonreflexive Banach spaces with  1-unconditional bases not containing a copy of $\ell_1$ (for $S$ see \cite[Corollary 0.8, Proposition 0.4, Theorem 0.5]{shura} and for $d_*(w,1)$ see \cite[p.\,1202]{eloi}, \cite[p.\,1643]{albiac} and \cite[p.\,19]{ltI}). By (b), Corollary \ref{corohmcve3} applies to $F = S$ and $F = d_*(w,1)$.\\
(e) By Proposition \ref{nnpro}, Corollary \ref{corohmcve3} applies to every Banach lattice $F$ that is a projection band in any of the Banach lattices listed above.
\end{example}

Our last purpose is to enlarge substantially the class of Banach lattices $F$ for which all Aron-Berner extensions of any $F$-valued Riesz multimorphism are Riesz multimorphisms. To do so, recall that, given a sequence
%\begin{proof}
%Segue do Corolário \ref{corohmcve2} e Lemma \ref{schauder}.
%\end{proof}
$(F_{n})_{n=1}^{\infty}$ of Banach lattices and $1\leq p<\infty$, %. Defina
$$\left(\oplus_n F_{n}\right)_{p}=\left\{(x_{n})_{n=1}^{\infty}: x_{n}\in F_{n} \text{ and } \|(x_{n})_{n=1}^{\infty}\|_{p}=\Big(\sum_{n=1}^{\infty}\|x_{n}\|^{p}\Big)^{\frac{1}{p}}<\infty\right\} \mbox{ and}$$
$$\left(\oplus_n F_{n}\right)_{0}=\{(x_{n})_{n=1}^{\infty}: x_{n}\in F_{n} \text{ and } \|x_{n}\|\longrightarrow 0\}, \|(x_{n})_{n=1}^{\infty}\| = \sup_n\|x_n\|,$$
are Banach lattices with the coordinatewise order. If $F = F_n$ for every $n$, it is usual to write $\ell_p(F)$ and $c_0(F)$ instead of $\left(\oplus_n F\right)_{p}$ and $\left(\oplus_n F\right)_{0}$.

\begin{proposition}\label{llpro} Let $1 < p < \infty$ and let $(F_n)_{n=1}^\infty$ be a sequence of Banach lattices such that, for every $n$, all Aron-Berner extensions of any $F_n$-valued Riesz multimorphism are Riesz multimorphisms. Then all Aron-Berner extensions of any $\left(\oplus_n F_n\right)_0$-valued and any $\left(\oplus_n F_n\right)_p$-valued Riesz multimorphism are Riesz multimorphisms.
\end{proposition}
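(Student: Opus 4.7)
The plan is to reduce the problem to the hypothesis on each $F_n$ through the coordinate projections. Set $F = (\oplus_n F_n)_0$ or $F = (\oplus_n F_n)_p$ with $1 < p < \infty$, and let $A \in \mathcal{L}_r(E_1,\ldots,E_m;F)$ be a Riesz multimorphism and $\rho \in S_m$. For each $n$, the coordinate projection $\pi_n \colon F \longrightarrow F_n$ is a Riesz homomorphism, so $\pi_n \circ A$ is an $F_n$-valued Riesz multimorphism. The standing hypothesis then gives that $AB_m^\rho(\pi_n \circ A)$ is a Riesz multimorphism, and Remark \ref{remmer} yields $AB_m^\rho(\pi_n \circ A) = \pi_n'' \circ AB_m^\rho(A)$; note that $\pi_n''$ is a Riesz homomorphism, being the bidual of a Riesz homomorphism.

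Fix $x_1'' \in E_1^{**},\ldots,x_m'' \in E_m^{**}$. To prove that $AB_m^\rho(A)$ is a Riesz multimorphism, it is enough to establish
\[
\bigl|AB_m^\rho(A)(x_1'',\ldots,x_m'')\bigr| = AB_m^\rho(A)(|x_1''|,\ldots,|x_m''|) \quad \text{in } F^{**}.
\]
Testing both sides against an element of $F^*$ of the form $\pi_n'(y_n^*)$ with $y_n^* \in F_n^*$, and using in turn the definition of the adjoint, the identity $AB_m^\rho(\pi_n \circ A) = \pi_n'' \circ AB_m^\rho(A)$, the fact that $\pi_n''$ is a Riesz homomorphism (so it commutes with the modulus in $F^{**}$), and that $AB_m^\rho(\pi_n \circ A)$ is a Riesz multimorphism, a short chain of equalities shows that both sides evaluated at $\pi_n'(y_n^*)$ coincide with $AB_m^\rho(\pi_n \circ A)(|x_1''|,\ldots,|x_m''|)(y_n^*)$. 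By linearity, the equality then holds for every $y^*$ in the linear span of $\bigcup_n \pi_n'(F_n^*)$, which is precisely the subspace of finitely supported sequences in $F^*$.

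The remaining step is to pass to the limit. Identifying $F^*$ with $(\oplus_n F_n^*)_1$ when $F = (\oplus_n F_n)_0$ and with $(\oplus_n F_n^*)_q$ (where $1/p + 1/q = 1$, hence $1 < q < \infty$) when $F = (\oplus_n F_n)_p$, in both cases the finitely supported sequences are norm-dense in $F^*$. Since both sides of the displayed equation are, as elements of $F^{**}$, norm-continuous functionals on $F^*$, the equality extends to all $y^* \in F^*$. The restriction $1 < p < \infty$ enters precisely here: for $p = 1$ one would have $q = \infty$ and this density would fail. The step I expect to spell out most carefully is the identification $F^* \cong (\oplus_n F_n^*)_q$ as a Banach lattice together with the explicit action of $\pi_n'$ as the insertion at the $n$-th coordinate, so that the density argument closes cleanly.
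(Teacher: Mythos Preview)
Your argument is correct and reaches the conclusion by a route that differs from the paper's. Both proofs start the same way: compose $A$ with the coordinate projections $\pi_n$, invoke the hypothesis on each $F_n$, and use Remark \ref{remmer} to get $AB_m^\rho(\pi_n\circ A)=\pi_n''\circ AB_m^\rho(A)$. From this point the two arguments diverge. The paper builds an explicit lattice isomorphism $\psi\colon F^{**}\longrightarrow (\oplus_n F_n^{**})_p$ (respectively $(\oplus_n F_n^{**})_\infty$ in the $c_0$-case) out of the duality identifications $\psi_1,\psi_2$, checks that $(\psi\circ AB_m^\rho(A))(x_1^{**},\ldots,x_m^{**})=(AB_m^\rho(\pi_n\circ A)(x_1^{**},\ldots,x_m^{**}))_n$, verifies the multimorphism identity coordinatewise in $(\oplus_n F_n^{**})_p$, and then pulls back through $\psi^{-1}$. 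Your proof instead tests the equality $|AB_m^\rho(A)(x_1'',\ldots,x_m'')|=AB_m^\rho(A)(|x_1''|,\ldots,|x_m''|)$ against functionals of the form $\pi_n'(y_n^*)$, uses that $\pi_n''$ is a Riesz homomorphism to pass the modulus through, and closes by norm-density of the finitely supported sequences in $F^*\cong(\oplus_n F_n^*)_q$ (with $q=1$ in the $c_0$-case). Your approach is lighter in that it only needs the identification of $F^*$ rather than of $F^{**}$; the paper's version, on the other hand, makes the coordinatewise structure of $AB_m^\rho(A)$ in the bidual completely explicit, which is conceptually informative even if not strictly necessary for the bare statement.
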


\begin{proof} Let $m \in \mathbb{N}$, $\rho \in S_m$ and let $E_1, \ldots, E_m$ be Banach lattices.

Given a Riesz multimorphism $A\in\mathcal{L}_{r}(E_{1},\ldots,E_{m};(\oplus_n F_{n})_{p})$, for each $k\in\mathbb{N}$ the projection $\pi_{k}\colon (\oplus_n F_{n})_p\longrightarrow F_{k},\,  \pi_{k}((y_{n})_{n=1}^{\infty})=y_{k}$, is a Riesz homomorphism, hence $\pi_{k}\circ A\in\mathcal{L}_{r}(E_{1},\ldots,E_{m};F_{k})$ is a Riesz multimorphism. By assumption,
\begin{equation}\label{mult2}
AB_{m}^{\rho}(\pi_{k}\circ A)\in \mathcal{L}_{r}(E^{\ast\ast}_{1},\ldots,E^{\ast\ast}_{m};F_{k}^{\ast\ast}) \mbox { is a Riesz multimorphism for every } k.
\end{equation}
Taking $q$ such that $\frac{1}{p}+\frac{1}{q}=1$, it is well know that the maps
\begin{align*}
\psi_{1}\colon (\oplus_n F_{n}^{\ast})_{q}\longrightarrow (\oplus_n F_{n})_{p}^{\ast}~,~\psi_{1}((y_{n}^{\ast})_{n=1}^{\infty})((y_{n})_{n=1}^{\infty})=\sum_{n=1}^{\infty}y_{n}^{\ast}(y_{n}),\\
\psi_{2}\colon (\oplus_n F_{n}^{\ast\ast})_{p}\longrightarrow (\oplus_n F_{n}^{\ast})_{q}^{\ast}~,~ \psi_{2}((y_{n}^{\ast\ast})_{n=1}^{\infty})((y_{n}^{\ast})_{n=1}^{\infty})=\sum_{n=1}^{\infty}y_{n}^{\ast\ast}(y_{n}^{\ast}),
% \psi_{3}\colon (\oplus_{n=1}^{\infty} F_{n}^{\ast})^{\ast}_{q}\longrightarrow (\oplus_{n=1}^{\infty} F_{n})_{p}^{\ast\ast},\, \psi_{3}(y^{\ast})=y^{\ast}\circ \psi^{-1}_{1}.
\end{align*}
are lattice isomorphisms, that is, Banach space isomorphisms + Riesz homomorphisms (see \cite[Theorem 4.6 and its proof]{positiveoperators}). So, $\psi_3 :=(\psi_1^{-1})^* \colon (\oplus_n F_{n}^{\ast})^{\ast}_{q}\longrightarrow (\oplus_n F_{n})_{p}^{\ast\ast}$ and $\psi:=\psi_{2}^{-1}\circ \psi_{3}^{-1}\colon  (\oplus_n F_{n})_{p}^{\ast\ast}\longrightarrow (\oplus_n F_{n}^{\ast\ast})_{p}$ are Banach space isomorphisms. It is clear that $\psi_3^{-1} = \psi_1^* \colon (\oplus_n F_{n})_{p}^{\ast\ast}\longrightarrow (\oplus_n F_{n}^{\ast})^{\ast}_{q}$. For each $k \in \mathbb{N}$, by $i_k \colon F_k^* \longrightarrow (\oplus_n F_{n}^{\ast})_{q}$ we denote the canonical embedding, $i_k(y_k^*) = (0, \ldots, 0, y_k^*,0, \ldots)$.
It will be useful to bear in mind that the inverse
%
%
%
%Para cada $1<p,q<\infty$ tal que $\frac{1}{p}+\frac{1}{q}=1$, considere os seguintes isomorfismos,
%\begin{align*}
%&\psi_{1}\colon (\oplus_{n=1}^{\infty} F_{n}^{\ast})_{q}\longrightarrow (\oplus_{n=1}^{\infty} F_{n})_{p}^{\ast},\, \psi_{1}((y_{n}^{\ast})_{n=1}^{\infty})((y_{n})_{n=1}^{\infty})=\sum_{n=1}^{\infty}y_{n}^{\ast}(y_{n}),\\
%&\psi_{2}\colon (\oplus_{n=1}^{\infty} F_{n}^{\ast\ast})_{p}\longrightarrow (\oplus_{n=1}^{\infty} F_{n}^{\ast})_{q}^{\ast},\, \psi_{2}((y_{n}^{\ast\ast})_{n=1}^{\infty})((y_{n}^{\ast})_{n=1}^{\infty})=\sum_{n=1}^{\infty}y_{n}^{\ast\ast}(y_{n}^{\ast}),\\
%& \psi_{3}\colon (\oplus_{n=1}^{\infty} F_{n}^{\ast})^{\ast}_{q}\longrightarrow (\oplus_{n=1}^{\infty} F_{n})_{p}^{\ast\ast},\, \psi_{3}(y^{\ast})=y^{\ast}\circ \psi^{-1}_{1}.
%\end{align*}
%Note que as invesas de
of $\psi_{2}$ is given by
$$\psi_{2}^{-1}\colon (\oplus_n F_{n}^{\ast})_{q}^{\ast}\longrightarrow (\oplus_n F_{n}^{\ast\ast})_{p}~,~ \psi_{2}^{-1}(y^{\ast\ast})=(y^{\ast\ast} \circ i_n)_{n=1}^{\infty}.$$
 %  where each $y_{n}^{\ast\ast}\in F_{n}^{\ast\ast}$ is given by $y_{n}^{\ast\ast}(y_{n}^{\ast})=y^{\ast\ast}(0,\ldots,0,y_{n}^{\ast},0,\ldots)$. %   and
%$$\psi_{3}^{-1}\colon (\oplus_n F_{n})_{p}^{\ast\ast}\longrightarrow (\oplus_n F_{n}^{\ast})^{\ast}_{q},\, \psi_{3}^{-1}(\varphi)=\varphi\circ \psi_{1}.$$
%Seja $A\in\mathcal{L}_{r}(E_{1},\ldots,E_{m};(\oplus_{n=1}^{\infty} F_{n})_{p})$ multimorfismo de Riesz. Para cada $k\in\mathbb{N}$, a projeção $\pi_{k}\colon \oplus_{n=1}^{\infty} F_{n}\longrightarrow F_{k},\,  \pi_{k}((y_{n})_{n=1}^{\infty})=y_{k}$ é um homomorfismo de Riesz, logo para todo $k\in\mathbb{N},$ $\pi_{k}\circ A\in\mathcal{L}_{r}(E_{1},\ldots,E_{m};F_{k})$ é um multimorfismo de Riesz e por \ref{mult1},
%\begin{equation}\label{mult2}
%AB_{m}^{\rho}(\pi_{k}\circ A)\in \mathcal{L}_{r}(E^{\ast\ast}_{1},\ldots,E^{\ast\ast}_{m};F_{k}^{\ast\ast}) \text{ para todo $k$ é multimorfismo de Riesz.}
%\end{equation}
%Ponha $\psi=\psi_{2}^{-1}\circ \psi_{3}^{-1}\colon  (\oplus_{n=1}^{\infty} F_{n})_{p}^{\ast\ast}\longrightarrow (\oplus_{n=1}^{\infty} F_{n}^{\ast\ast})_{p}$. É claro que $\psi$ é um isomorfismo. Por outro lado,
   For all $y^{\ast}_{k}\in F_{k}^{\ast}$ and $(y_{n})_{n=1}^{\infty}\in (\oplus_n F_{n})_{p}$,
$$(y^{\ast}_{k}\circ \pi_{k})((y_{n})_{n=1}^{\infty})=y_{k}^{\ast}(\pi_{k}((y_{n})_{n=1}^{\infty}))=y_{k}^{\ast}(y_{k})=\psi_{1}(0,\ldots,0,y_{k}^{\ast},0,\ldots)((y_{n})_{n=1}^{\infty}),$$
showing that $y^{\ast}_{k}\circ \pi_{k}=\psi_{1}(0,\ldots,0,y_{k}^{\ast},0,\ldots).$ So, for $x_{j}^{\ast\ast}\in E_{j}^{\ast\ast}, j=1,\ldots,m$, $k\in\mathbb{N}$ and $y_{k}^{\ast}\in F_{k}^{\ast}$,
\begin{align*}
(AB_{m}^{\rho}(A)(x_{1}^{\ast\ast},\ldots,x_{m}^{\ast\ast})\circ \psi_{1}\circ i_k)(y_{k}^{\ast})&=(AB_{m}^{\rho}(A)(x_{1}^{\ast\ast},\ldots,x_{m}^{\ast\ast})\circ \psi_{1})(0,\ldots,0,y_{k}^{\ast},0,\ldots)\\
&=AB_{m}^{\rho}(A)(x_{1}^{\ast\ast},\ldots,x_{m}^{\ast\ast})(\psi_{1}(0,\ldots,0,y_{k}^{\ast},0,\ldots))\\
&=AB_{m}^{\rho}(A)(x_{1}^{\ast\ast},\ldots,x_{m}^{\ast\ast})(y^{\ast}_{k}\circ \pi_{k})\\&=AB_{m}^{\rho}(\pi_{k}\circ A)(x_{1}^{\ast\ast},\ldots,x_{m}^{\ast\ast})(y^{\ast}_{k}),
\end{align*}
where the last equality follows from Remark \ref{remmer}. This shows that  $AB_{m}^{\rho}(A)(x_{1}^{\ast\ast},\ldots,x_{m}^{\ast\ast})\circ \psi_{1}\circ i_k =AB_{m}^{\rho}(\pi_{k}\circ A)(x_{1}^{\ast\ast},\ldots,x_{m}^{\ast\ast})$ for every $k\in\mathbb{N}$. Therefore,
\begin{align*}
(\psi\circ AB_{m}^{\rho}(A))&(x_{1}^{\ast\ast},\ldots,x_{m}^{\ast\ast})=\psi( AB_{m}^{\rho}(A)(x_{1}^{\ast\ast},\ldots,x_{m}^{\ast\ast}))\\
&=(\psi_{2}^{-1}\circ \psi_{3}^{-1})( AB_{m}^{\rho}(A)(x_{1}^{\ast\ast},\ldots,x_{m}^{\ast\ast}))=\psi_{2}^{-1}(\psi_{3}^{-1}( AB_{m}^{\rho}(A)(x_{1}^{\ast\ast},\ldots,x_{m}^{\ast\ast})))\\&=\psi_{2}^{-1}( AB_{m}^{\rho}(A)(x_{1}^{\ast\ast},\ldots,x_{m}^{\ast\ast})\circ \psi_{1})=(AB_{m}^{\rho}(A)(x_{1}^{\ast\ast},\ldots,x_{m}^{\ast\ast})\circ \psi_{1}\circ i_n)_{n=1}^{\infty}\\&=(AB_{m}^{\rho}(\pi_{n}\circ A)(x_{1}^{\ast\ast},\ldots,x_{m}^{\ast\ast}))_{n=1}^{\infty}.
\end{align*}
%Segue que, $(\psi\circ AB_{m}^{\rho}(A))(x_{1}^{\ast\ast},\ldots,x_{m}^{\ast\ast})=(AB_{m}^{\rho}(\pi_{n}\circ A)(x_{1}^{\ast\ast},\ldots,x_{m}^{\ast\ast}))_{n=1}^{\infty}.$ Assim, por
By (\ref{mult2}),
\begin{align*}
|(\psi\circ AB_{m}^{\rho}(A))&(x_{1}^{\ast\ast},\ldots,x_{m}^{\ast\ast})|=|(AB_{m}^{\rho}(\pi_{n}\circ A)(x_{1}^{\ast\ast},\ldots,x_{m}^{\ast\ast}))_{n=1}^{\infty}|\\
&=(|AB_{m}^{\rho}(\pi_{n}\circ A)(x_{1}^{\ast\ast},\ldots,x_{m}^{\ast\ast})|)_{n=1}^{\infty}=(AB_{m}^{\rho}(\pi_{n}\circ A)(|x_{1}^{\ast\ast}|,\ldots,|x_{m}^{\ast\ast}|))_{n=1}^{\infty}\\
&=(\psi\circ AB_{m}^{\rho}(A))(|x_{1}^{\ast\ast}|,\ldots,|x_{m}^{\ast\ast}|).
\end{align*}
Thus far we have proved that $\psi\circ AB_{m}^{\rho}(A)$ is a Riesz multimorphism. Since $\psi$ is a positive bijection with positive inverse, $\psi$ is a Riesz homomorphism \cite[Theorem 2.15]{positiveoperators}, so is $\psi^{-1}$. It follows that $AB_{m}^{\rho}(A)=\psi^{-1}\circ \psi \circ AB_{m}^{\rho}(A)$ is a Riesz multimorphism as well. % multimorfismo de Riesz, isto é, $AB_{m}^{\rho}(A)\in \mathcal{L}_{r}(E_{1}^{\ast\ast},\ldots,E_{m}^{\ast\ast};(\oplus_{n=1}^{\infty}F_{n})^{\ast\ast})$ é multimorfismo de Riesz, o que nos permite concluir que $(\oplus_{n=1}^{\infty}F_{n})_{p}$ com $1<p<\infty$ preserva multimorfismo de Riesz.

The case of $(\oplus_nF_{n})_{0}$-valued multimorphisms is analogous using the lattice  isomorphisms  %preserva multimorfismos de Riesz, considere os isomorfismos
\begin{align*}
&\gamma_{1}\colon (\oplus_n F_{n}^{\ast})_{1}\longrightarrow (\oplus_n F_{n})_{0}^{\ast}~,~ \gamma_{1}((y_{n}^{\ast})_{n=1}^{\infty})((y_{n})_{n=1}^{\infty})=\sum_{n=1}^{\infty}y_{n}^{\ast}(y_{n}),\\
&\gamma_{2}\colon (\oplus_n F_{n}^{\ast\ast})_{\infty}\longrightarrow (\oplus_n F_{n}^{\ast})_{1}^{\ast}~,~ \gamma_{2}((y_{n}^{\ast\ast})_{n=1}^{\infty})((y_{n}^{\ast})_{n=1}^{\infty})=\sum_{n=1}^{\infty}y_{n}^{\ast\ast}(y_{n}^{\ast}),
\end{align*}
where $(\oplus_n F_{n}^{\ast\ast})_{\infty}$ is the Banach lattice formed by bounded sequences.
\end{proof}

\begin{example}\rm Let $1 < p < \infty$,  $(p_n)_{n=1}^\infty \subseteq (1, + \infty)$ and let $(F_n)_{n=1}^\infty$ be a sequence of Banach lattices such that, for every $n$, $F_n = c_0$ or $\ell_{p_n}$ or any of the Banach lattices in Example  \ref{ffex}. By Corollary \ref{corohmcve3} and Proposition \ref{llpro}, all Aron-Berner extensions of any Riesz multimorphisms taking values in $\left(\oplus_n F_n\right)_0$ or in $\left(\oplus_n F_n\right)_p$ are Riesz multimorphisms. % $1 < p < \infty$, where each $F_n = c_0, \ell_s, 1 < s < \infty$, or a separable Hilbert lattice.
In particular, all Aron-Berner extensions of any Riesz multimorphism taking values in one of the following Banach lattices are Riesz multimorphisms: $c_0(E)$ and $\ell_p(E)$ where $1 < p < \infty$ and $E = c_0, \ell_s, 1< s < \infty, T^*, T, S, d_*(w,1)$ or a projection band in any of these spaces. The space $c_0(c_0)$ is not a new example because it is Riesz isometric to $c_0$.

%\bigskip
%\textcolor{blue}{$c_0(\ell_p), c_0(T), c_0(T^*), c_0(S), \ell_p(c_0), \ell_p(\ell_s), \ell_p(T^*)$, $\ell_p(T), \ell_p(S)$, $1 < p,s < \infty$.} %The space $c_0(c_0)$ is not a new example because it is Riesz isometric to $c_0$.%Recall that, for every Banach space $E$,  $E= c_0 \widehat{\otimes}_\varepsilon E$ isometrically.
\end{example}

\bigskip

\noindent Faculdade de Matem\'atica~~~~~~~~~~~~~~~~~~~~~~Instituto de Matem\'atica e Estat\'istica\\
Universidade Federal de Uberl\^andia~~~~~~~~ Universidade de S\~ao Paulo\\
38.400-902 -- Uberl\^andia -- Brazil~~~~~~~~~~~~ 05.508-090 -- S\~ao Paulo -- Brazil\\
e-mail: botelho@ufu.br ~~~~~~~~~~~~~~~~~~~~~~~~~e-mail: luisgarcia@ime.usp.br

%\bigskip
%\bigskip
%
%$F$ preserva multimorfismos de Riesz se as extensões de Aron-Berner de $F$-valued MR são MR
%
%\bigskip
%
%Teorema. Se $F_n$ preserva MR para todo $n$ então $(\oplus_n F_n)_p$ e $(\oplus_n F_n)_0$ preservam MR

\end{document}